\DeclareMathAlphabet{\mathpzc}{OT1}{pzc}{m}{it}
\DeclareMathAlphabet\euscr{T1}{qzc}{m}{n}
\newtheorem{theorem}{Theorem}[section]
\newtheorem*{theorem*}{Theorem}
\newtheorem{proposition}[theorem]{Proposition}
\newtheorem{lemma}[theorem]{Lemma}
\newtheorem{corollary}[theorem]{Corollary}
\theoremstyle{definition}
\theoremstyle{remark}
\newtheorem{remark}[theorem]{Remark}
\newtheorem*{remark*}{Remark}
\newcommand{\con}[1]{\mathbb{#1}}
\newcommand{\R}{\con{R}} 
\newcommand{\Z}{\con{Z}} 
\newcommand{\leqnomode}{\tagsleft@true\let\veqno\@@leqno}
\newcommand{\reqnomode}{\tagsleft@false\let\veqno\@@eqno}
\newcommand\beqc[1]{\left\{\begin{array}{#1}}
\newcommand\eeqc{\end{array} \right.}
\def\bmatrix{\begin{pmatrix}}
\def\ematrix{\end{pmatrix}}
\DeclareMathOperator{\PV}{P.V.}
\let\div\relax
\DeclareMathOperator{\div}{div}
\numberwithin{equation}{section}
\title[Uniqueness for integro-differential equations in 1D and applications]
{Uniqueness for linear integro-differential equations in the real line and applications}
\author{Juan-Carlos Felipe-Navarro}
\address{J.C. Felipe-Navarro:
	Universitat Polit\`ecnica de Catalunya and BGSMath, Departament de Matem\`{a}tiques, Diagonal 647, 08028 Barcelona, Spain}
\email{juan.carlos.felipe@upc.edu}
\thanks{The author acknowledges financial support from the Spanish Ministry of Economy and Competitiveness (MINECO), through the Mar\'ia de Maeztu Programme for Units of Excellence in R\&D (MDM-2014-0445-16-4), is supported by MINECO grant MTM2017-84214-C2-1-P, is member of the Barcelona Graduate School of Mathematics (BGSMath), and is part of the Catalan research group 2017 SGR 01392.}
\keywords{integro-differential operators, uniqueness, Liouville-type approach, nondegeneracy.}
\begin{document}
	
\begin{abstract}
In this work we prove the uniqueness of solutions to the nonlocal linear equation $L \varphi- c(x)\varphi = 0$ in $\R$, where $L$ is an elliptic integro-differential operator, in the presence of a positive solution or of an odd solution vanishing only at zero. As an application, we deduce the nondegeneracy of layer solutions (bounded and monotone solutions) to the semilinear problem $L u = f(u)$ in $\R$ when the nonlinearity is of Allen-Cahn  type. To our knowledge, this is the first work where such uniqueness and nondegeneracy results are proven in the nonlocal framework when the Caffarelli-Silvestre extension technique is not available. Our proofs are based on a nonlocal Liouville-type method developed by Hamel, Ros-Oton, Sire, and Valdinoci for nonlinear problems in dimension two.
\end{abstract}

\maketitle


\section{Introduction and main results}
\label{Sec:Introduction}

In this paper we study the uniqueness, up to a multiplicative constant, of solutions to the linear integro-differential equation
\begin{equation} \label{Ecuacion Lineal}
L\varphi-c(x)\varphi=0 \,\,\,\,\,\,\, \text{in} \,\,\R,
\end{equation}
under certain assumptions on the nonlocal operator $L$ and the potential function $c$, and in the presence of a positive solution or of an odd solution vanishing only at zero. Throughout the paper, $L$ will be assumed to be an elliptic integro-differential operator of order between one (included) and two.

The uniqueness of solutions to equations of the form \eqref{Ecuacion Lineal} is a very important tool in the theory of PDEs. Indeed, there are many motivations (from both linear and nonlinear frameworks) to treat this problem. On the one hand, it is in the essence of Sturm--Liouville theory on eigenfunctions and eigenvalues. On the other hand, it has important consequences when studying qualitative properties of solutions to semilinear problems. For instance, in the context of nonlinear Schrödinger equations, the nondegeneracy of ground state solutions (which plays a very important role in the stability and blow up analysis of solitary waves to related time-dependent equations) is reduced to study the uniqueness of solution to equation \eqref{Ecuacion Lineal} when $L$ is replaced by the radial component of the Laplacian, i.e., $L=r^{1-n} (r^{n-1} u_r)_r$  (see \cite{Gesztesy-Schrodinger}). Furthermore, in the framework of the Allen-Cahn equation, Berestycki, Caffarelli, and Nirenberg \cite{BerestyckiCaffarelliNiremberg-Qualitative} realized that the uniqueness of solutions to equation \eqref{Ecuacion Lineal} in dimension $n$ for the local case (with $L$ replaced by a general second order uniformly elliptic operator) leads to the resolution of a conjecture by De Giorgi for monotone solutions.\footnote{All the directional derivatives of a solution $u$ to the Allen-Cahn equation satisfy the linearized problem. Moreover, from the monotonicity assumption in the De Giorgi conjecture we know that one of the derivatives is positive, say $\partial_n u >0$. Therefore, we can apply the uniqueness result to the linearized equation in the presence of a positive solution to conclude that all partial derivatives are multiples of $\partial_n u$. In particular, the gradient of $u$ has a fixed direction, which turns out to be equivalent to the one-dimensionality of the solution $u$.}

In the present paper, equation~\eqref{Ecuacion Lineal} is driven by a translation invariant integro-differential operator of the form
\begin{equation} \label{Tipo operador}
Lu(x) = \PV\,\int_{\R^n} \big(u(x)-u(y)\big)\,K(x-y)\,dy.
\end{equation}
In this nonlocal setting there are lots of basic open problems concerning solutions in dimension one, unlike the case of local equations where the one dimensional problem \eqref{Ecuacion Lineal} is just a second order linear ODE. For instance, a full understanding of the phase portrait of solutions in the nonlocal framework is missing.

Most of the works in the literature concerning uniqueness of solutions to \eqref{Ecuacion Lineal}-\eqref{Tipo operador} treat the simplest case $L=(-\Delta)^s$ (see \cite{FrankLenzmann, FrankLenzmannSilvestre, ChenD'AmbrosioLi, RosOtonSerra-Stable, Fall-Liouville, CabreSolaMorales, CabreSireI} and the comments along this introduction). In such a scenario, the main analytic tools are potential theory, Fourier analysis, and the Caffarelli-Silvestre extension problem. Since they are not available when dealing with more general integro-differential operators, new techniques are needed. In \cite{HamelRosOtonSireValdinoci}, Hamel, Ros-Oton, Sire, and Valdinoci develop a purely nonlocal method (in contrast to the local extension problem) to treat these operators. They use it to establish a uniqueness result in dimension two (motivated by a nonlocal version of De Giorgi's conjecture) in the case of operators with compactly supported kernel and power-like behavior at the origin. In the present paper, their methodology is used in dimension one for the first time. It leads to uniqueness results for equations of the form \eqref{Ecuacion Lineal}-\eqref{Tipo operador}. Working in dimension one allows us to get rid of the compact support assumption in \cite{HamelRosOtonSireValdinoci}.

Throughout the paper, we assume that the kernel $K$ of the integro-differential operators satisfies the positivity and symmetry conditions
\begin{equation}
	\tag{K1}
	\label{eq: K1}
	K(z) >0 \ \ \ \ \text{and} \ \ \ \   K(-z) = K(z),
\end{equation}
together with an ellipticity assumption. That is, to be bounded both from above and below by a multiple of the kernel of the fractional Laplacian, i.e.,
\begin{equation}
	\tag{K2}
	\label{eq: K2}
	\frac{\lambda}{|z|^{n+2s}} \leq  K(z) \leq \frac{\Lambda}{|z|^{n+2s}},
\end{equation}
for some constants $\Lambda\geq \lambda > 0$ and $s\in[1/2,1)$. Note that the operator $L$ will be assumed to be of order between one (included) and two. Condition \eqref{eq: K2} is one of the most frequently adopted when dealing with nonlocal operators of the form \eqref{Tipo operador}. It is known to yield Hölder regularity of solutions (see
\cite{RosOton-Survey} and \cite{Serra-C2s+alphaRegularity}).

In some results the lower bound will not be assumed, and the upper one can be relaxed to
\begin{equation}
	\tag{K3}
	\label{eq: K3}
	K(z) \leq \frac{\Lambda_1}{|z|^{n+2\underline{s}}}+\frac{\Lambda_2}{|z|^{n+2\overline{s}}},
\end{equation}
for some constants $\Lambda_1,\Lambda_2\geq 0$ and $1/2\leq \underline{s}\leq \overline{s}<1$. This is the case of Theorem~\ref{Mi teorema general} and Corollary~\ref{Corolario suma fraccionarios}.

We will sometimes assume the potential function $c$ to be negative at infinity. That is,
\begin{equation}
	\label{eq: V1}
	c(x) \leq -c_0 < 0 \,\,\,\,\,\, \text{in} \,\,\, \R\setminus[-R_0,R_0],
\end{equation}
for some positive constants $c_0$ and $R_0$.

The following is our first important result. It establishes the uniqueness of solution to \eqref{Ecuacion Lineal} in the presence of a positive one (in addition to other assumptions).

\begin{theorem}{} \label{Teorema L0}
	Let $L$ be an integro-differential operator of the form \eqref{Tipo operador} satisfying the symmetry and ellipticity conditions \eqref{eq: K1} and \eqref{eq: K2} with $s \in [1/2,1)$. For $\alpha>2s-1$, let $w$ and $\widetilde{w}$ be two $C^{1,\alpha}$ solutions of the linear equation
	$$ L\varphi-c(x)\varphi=0 \,\,\,\,\,\,\, \text{in} \,\,\,\R, $$
	with 
	$$w>0.$$
	Assume that 
	\begin{itemize}[leftmargin=*]
		\item[$\bullet$] either both $w$ and $\tilde{w}$ are bounded and the potential function $c\in L^\infty(\R)$ satisfies
		$$ c(x) \leq -c_0 < 0 \,\,\,\,\,\, \text{in} \,\,\, \R\setminus[-R_0,R_0], \,\,\,\,\,\, \text{ and } \,\,\,\,\,\, ||c||_{C^{\beta_0}(\R)} < +\infty $$
		for some positive constants $c_0$, $R_0$, and $\beta_0$;
		\item[$\bullet$]  or $w$ satisfies
		$$ 0 < C^{-1} \leq w(x) \leq C\,\,\,\,\,\,\, \text{in} \,\,\,\R   $$
		and $\tilde{w}$ is such that
		$$ ||\widetilde{w}||_{L^\infty(-R,R)} \leq C R^{s-\frac{1}{2}} \, \, \text{ for all } \, \, R>1,$$
		for some positive constant $C$.
	\end{itemize}
	
	Then
	$$ \frac{\widetilde{w}}{w} \equiv \text{constant}. \footnote{The result can also be established (see the proof of Theorem~\ref{Mi teorema general} and the estimates in Section~\ref{sec:Integrability}) in the second scenario for $s\in (0,1/2)$ if one assumes that the solution $\tilde{w}$ is bounded and decays as $||\widetilde{w}||_{L^\infty(\R\setminus(-R,R))} \leq C R^{s-\frac{1}{2}}$ for $R>1$. In this precise case one would conclude that $\tilde{w} \equiv 0$ since $w>C^{-1}$ in $\R$.} $$
\end{theorem}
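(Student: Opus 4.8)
The plan is to adapt the nonlocal Liouville (or ``ground state substitution'') method to the ratio $\sigma := \widetilde{w}/w$, which is well defined and, in both scenarios, suitably controlled precisely because $w>0$. First I would establish the degenerate equation satisfied by $\sigma$. Writing $\widetilde w=\sigma w$ and using the elementary splitting
\[
\sigma(x)w(x)-\sigma(y)w(y)=\sigma(x)\big(w(x)-w(y)\big)+w(y)\big(\sigma(x)-\sigma(y)\big),
\]
one computes $L\widetilde w(x)=\sigma(x)Lw(x)+\PV\int_{\R}w(y)\big(\sigma(x)-\sigma(y)\big)K(x-y)\,dy$. Since both $w$ and $\widetilde w$ solve \eqref{Ecuacion Lineal}, we have $c\,\widetilde w=\sigma\,c\,w=\sigma\,Lw$, so the leading terms $\sigma(x)Lw(x)$ cancel, leaving, after multiplication by $w(x)$, the equation in divergence form
\[
\PV\int_{\R}\big(\sigma(x)-\sigma(y)\big)\,w(x)w(y)\,K(x-y)\,dy=0\qquad\text{for all }x\in\R.
\]
Here the regularity hypothesis $w,\widetilde w\in C^{1,\alpha}$ with $\alpha>2s-1$ is what guarantees that each principal value converges and that the rearrangement above is legitimate.

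Next I would run a Caccioppoli-type energy estimate. Testing the displayed identity against $\sigma(x)\eta(x)^2$, integrating in $x$, and symmetrizing against the measure $d\mu(x,y):=w(x)w(y)K(x-y)\,dx\,dy$ (symmetric by \eqref{eq: K1}), one obtains
\[
\int_{\R}\!\!\int_{\R}\big(\sigma(x)\eta(x)^2-\sigma(y)\eta(y)^2\big)\big(\sigma(x)-\sigma(y)\big)\,d\mu(x,y)=0.
\]
Using the algebraic identity $(a\eta_1^2-b\eta_2^2)(a-b)=(a-b)^2\frac{\eta_1^2+\eta_2^2}{2}+(a^2-b^2)\frac{\eta_1^2-\eta_2^2}{2}$ together with Young's inequality to absorb the cross term, this yields the fundamental estimate
\[
\int_{\R}\!\!\int_{\R}\big(\sigma(x)-\sigma(y)\big)^2\eta(x)^2\,d\mu\;\le\;C\int_{\R}\!\!\int_{\R}\big(\sigma(x)^2+\sigma(y)^2\big)\big(\eta(x)-\eta(y)\big)^2\,d\mu .
\]

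It then remains to choose cutoffs $\eta=\eta_R$ and let $R\to\infty$ to force the left-hand side to vanish, i.e. $\sigma(x)=\sigma(y)$ for $\mu$-a.e.\ pair, whence $\sigma$ is constant. Here the growth hypotheses enter decisively: with $w$ bounded above and below and the bound $\|\widetilde w\|_{L^\infty(-R,R)}\le CR^{s-1/2}$, so that $|\sigma(x)|\lesssim (1+|x|)^{s-1/2}$, the ellipticity \eqref{eq: K2} (giving $K(z)\sim|z|^{-1-2s}$ when $n=1$) makes the right-hand side exactly scaling-critical: a naive cutoff $\eta_R(\cdot)=\eta(\cdot/R)$ leaves it merely bounded rather than infinitesimal. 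This is where the exponent $s-\tfrac12$ is sharp, and I would overcome it with a logarithmic cutoff, for which the corresponding nonlocal energy of $\eta_R$ carries an additional vanishing factor of order $(\log R)^{-1}$, sending the right-hand side to $0$. For the second scenario this finishes the argument directly; in the first scenario the two ingredients I used — the lower bound on $w$ and the sublinear growth of $\widetilde w$ — are not assumed but must first be derived from boundedness together with the sign condition $c\le -c_0<0$ at infinity, via maximum-principle and Harnack-type arguments for $L$ (using $\|c\|_{C^{\beta_0}}<\infty$ for the requisite regularity).

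I expect the main obstacle to be precisely this passage to the limit in the critical scaling. Controlling the right-hand side of the energy estimate uniformly, and then extracting genuine decay from the logarithmic cutoff, requires careful handling of the nonlocal tails against the borderline integrability of $|z|^{-1-2s}$ and the admissible growth of $\sigma$; the algebraic identity and the divergence-form equation are, by comparison, routine. A secondary difficulty, confined to the first scenario, is establishing the quantitative decay $\|\widetilde w\|_{L^\infty(-R,R)}\le CR^{s-1/2}$ and the positive lower bound on $w$ from the negativity of $c$ at infinity, which is really a separate barrier/maximum-principle estimate feeding the reduction to the second scenario.
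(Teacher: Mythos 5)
Your first two steps reproduce, in pointwise-then-integrated form, exactly the paper's Lemma~\ref{Lema ecuacion sigma}: your symmetrized identity with the algebraic splitting $(a\eta_1^2-b\eta_2^2)(a-b)=(a-b)^2\tfrac{\eta_1^2+\eta_2^2}{2}+(a^2-b^2)\tfrac{\eta_1^2-\eta_2^2}{2}$ is precisely the equality case of that lemma, so up to this point you are on the paper's track. Where you genuinely diverge is the endgame at the critical scaling. The paper keeps the standard cutoff $\eta_R(x)=\eta(|x|/R)$ and never makes the right-hand side vanish: instead it applies Cauchy--Schwarz to get $J_1^2\le C J_1$ (with $C$ uniform in $R$ thanks to the integrability bounds of Section~\ref{sec:Integrability}), concludes that the full energy $\iint(\sigma(x)-\sigma(y))^2 w(x)w(y)K(x-y)\,dx\,dy$ is finite, and then uses dominated convergence to show that the portion of this finite energy carried by the cross-shaped sets $S_R$ tends to zero, which closes the loop. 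Your alternative --- a logarithmic cutoff gaining a factor $(\log R)^{-1}$ --- is a legitimate different route: a careful tail computation (pairs with $|x-y|\gtrsim\max(|x|,|y|)$ are the delicate ones, since there the cutoff difference is not pointwise small) does give $O(1/\log R)$ for the right-hand side under the growth $|\sigma(x)|\lesssim(1+|x|)^{s-1/2}$ and \eqref{eq: K2} with $s\ge 1/2$. What the paper's trick buys is that no new kernel estimates beyond the $R$-uniform bound of Corollary~\ref{Corolario integrabilidad} are needed; what yours buys is a self-contained, more classical capacity argument, at the price of redoing those estimates with logarithmic weights.

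The genuine gap is in your reduction of the first scenario to the second. You propose to derive a uniform lower bound $w\ge C^{-1}>0$ from boundedness and $c\le -c_0<0$ at infinity, ``via maximum-principle and Harnack-type arguments.'' This is impossible: no such bound holds, and in fact it fails in the very application the theorem is designed for. In Theorem~\ref{Nondegeneracy Layer} one takes $w=u'$ for a layer solution $u$, and $u'\to 0$ at infinity (for the fractional Allen--Cahn layer, $u'$ decays like $|x|^{-1-2s}$); a Harnack inequality controls oscillation of $w$ on compact sets but cannot prevent decay at infinity. Consequently the first scenario can never be subsumed into the second, and what must be proven instead is the boundedness of the quotient $\sigma=\widetilde{w}/w$ itself, without ever bounding $1/w$ globally. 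This is the paper's Proposition~\ref{Proposicion acotacion sigma}: one applies a maximum principle in the exterior of $[-R_0,R_0]$ (Proposition~\ref{Lema principio maximo}, proved by evaluating the operator along a minimizing sequence, and valid because $c\le -c_0<0$ there) to the two functions $\varphi_\pm=Cw\pm\widetilde{w}$ with $C=\|\widetilde{w}/w\|_{L^\infty(-R_0,R_0)}$, thereby transferring the bound on $\sigma$ from the compact interval, where $\min w>0$, to all of $\R$. Note also that this exterior maximum principle requires the global seminorm bounds $[\varphi_\pm']_{C^\alpha(\R)}<+\infty$, which for merely bounded solutions is obtained through a regularity bootstrap using $\|c\|_{C^{\beta_0}(\R)}<+\infty$ and interior Schauder-type estimates; this is the other place where the hypotheses of the first scenario are consumed, and it is absent from your outline.
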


Let us point out that some assumptions concerning the kernel can be relaxed to include a bigger class of operators (see Theorem~\ref{Mi teorema general} for the precise statement) such as the sum of fractional Laplacians with different order (see Corollary~\ref{Corolario suma fraccionarios}). Nevertheless, for the sake of clarity and simplicity we prefer to state Theorem~\ref{Teorema L0} here.

To the best of our knowledge, Theorem~\ref{Teorema L0} is the first uniqueness result for general integro-differential operators in dimension one. Previous analogue results could only cover the case of the fractional Laplacian (see Remark~\ref{Remark:CommentsMainTheorem} for comments on such works). 

In order to prove uniqueness we follow a Liouville-type method. The main idea consists of finding an equation for the quotient of two solutions, which is the crucial contribution by Hamel, Ros-Oton, Sire, and Valdinoci \cite{HamelRosOtonSireValdinoci} for general integro-differential operators, and then showing that such a quotient is constant. This requires a growth estimate in both the local and nonlocal cases.

Unlike \cite{HamelRosOtonSireValdinoci}, where a key point is assuming that the kernels have compact support, we adapt the strategy in order to remove such a condition by taking advantage of the one dimensionality of the problem. In our approach, the first step is controlling the growth of the quotient of the solutions. This comes for free when the positive solution is just bounded from below by a strictly positive constant. However, a finer analysis is needed when the positive solution can be arbitrarily close to zero at infinity. In that case, we prove the boundedness of the quotient by using condition $\eqref{eq: V1}$ and the boundedness of the solutions. Here, we use a maximum principle in the exterior of an interval, proved in Section~\ref{sec:MaxPrinciple}, in order to compare both solutions by transferring the information from the interval (where we know the quotient is bounded) to the whole line. The second ingredient to prove the uniqueness theorem is an integral estimate for the function $K(x-y)$ with respect to both variables $x$ and $y$ in unbounded cross-shaped regions of the plane. In fact, the validity of this estimate is what prevents us from extending our result to $s\in(0,1/2)$. We show it in Section~\ref{sec:Integrability}. Let us point out that both ingredients become trivial when working with kernels with compact support, as it is done in \cite{HamelRosOtonSireValdinoci}.

\begin{remark} \label{Remark:CommentsMainTheorem}
	As it is natural, our result, which includes a big  class of integro-differential operators, is not optimal when we apply it to the fractional Laplacian. In order to compare it with other similar results in the literature, let us distinguish two cases depending on whether the equation has a zeroth order term or not.
	
	On the one hand, when $c\equiv 0$, in \cite{BogdanKulczyckiNowak-Liouville}, Bogdan, Kulczycki, and Nowak used a gradient estimate to show that nonnegative $s$-harmonic functions are constant. Later on, Chen, D'Ambrosio, and Lin \cite{ChenD'AmbrosioLi} proved, by using potential theory and Fourier analysis, a Liouville theorem for the fractional Laplacian with the growth condition 
	$$\liminf_{|x|\to \infty} \frac{u(x)}{|x|^\gamma} = 0, $$
	if $0\leq \gamma\leq 1$ and $\gamma<2s$. In this scenario, our result, by taking $w\equiv 1$ as the positive solution, leads to solutions growing less or equal than $|x|^{s-1/2}$ at infinity being constant. Thus, we notice what we have previously announced, that our condition is not sharp for the fractional Laplacian.
	
	On the other hand, when the potential function is not identically zero, it is known that the uniqueness result for the fractional Laplacian, with $s \in [1/2,1)$, follows from Theorem~4.10 by Cabré and Sire in \cite{CabreSireI} (see also the work by Cabré and Solà-Morales \cite{CabreSolaMorales} for the half-Laplacian) and the use of the local extension problem. In this case, unlike our result, no condition on the potential function (or the positive solution) needs to be assumed.
\end{remark}

An important and direct application of Theorem~\ref{Teorema L0} is the nondegeneracy of layer solutions to Allen-Cahn type equations. Let us recall that a bounded solution to the semilinear problem 
\begin{equation} \label{eq:Semilinear}
	L u = f(u) \,\,\,\,\,\,\, \text{in} \,\,\R,
\end{equation}
is called \emph{layer solution} if it is strictly increasing. In particular, it has limits at infinity, which (without loss of generality) we can consider to be $\pm 1$. 

When $L$ is a second order differential operator, layer solutions to equation \eqref{eq:Semilinear} are just particular cases of heteroclinic connections to nonlinear ODEs. Nevertheless, in the nonlocal setting, even the existence of such solutions is not an easy problem due to the lack of an analogous nonlocal ODE theory. In the fractional case $L= (-\Delta)^s$, existence and uniqueness are shown in \cite{CabreSolaMorales, CabreSireI, CabreSireII} by using the extension problem. For more general integro-differential operators, we can refer to the work by Cozzi and Passalacqua \cite{CozziPassalacqua} where they prove existence, uniqueness (up to translations), and some qualitative properties of layer solutions (see \cite{Felipe-NavarroSanz-Perela:IntegroDifferentialI} for further properties). Here, we prove nondegeneracy:

\begin{theorem} \label{Nondegeneracy Layer}
	Let $L$ be an integro-differential operator of the form \eqref{Tipo operador} satisfying the symmetry and ellipticity conditions \eqref{eq: K1} and \eqref{eq: K2}  with $s \in [1/2,1)$. For $\gamma>0$, let $f\in C^{1,\gamma}([-1,1])$ be any given nonlinearity such that $f'(\pm1)<0$. 
	
	Assume that $u$ is a bounded solution to the semilinear equation \eqref{eq:Semilinear}, satisfying $u'>0$ and $\lim_{x\to\pm\infty} u(x) = \pm 1$. 
	
	Then, $u$ is nondegenerate, i.e., up to a multiplicative constant, $u'$ is the unique bounded solution to the linearized equation $L \varphi - f'(u) \varphi = 0$ in $\R$.
\end{theorem}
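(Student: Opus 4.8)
The plan is to deduce the statement directly from Theorem~\ref{Teorema L0} by identifying the correct positive solution and potential. Since $L$ acts by convolution against a translation-invariant kernel, differentiating the semilinear equation \eqref{eq:Semilinear} in $x$ turns $u'$ into a solution of the linearized problem with potential $c(x):=f'(u(x))$. The monotonicity hypothesis $u'>0$ then supplies the positive solution $w=u'$ required by Theorem~\ref{Teorema L0}, and any bounded solution $\widetilde{w}$ of $L\varphi-f'(u)\varphi=0$ plays the role of the second solution; the conclusion $\widetilde{w}/w\equiv\text{constant}$ is precisely the asserted nondegeneracy.

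First I would make the differentiation rigorous. For $h\neq 0$ set $u_h(x):=u(x+h)$; by translation invariance of $L$ the function $u_h$ solves $Lu_h=f(u_h)$, so the difference quotient $(u_h-u)/h$ satisfies $L\big((u_h-u)/h\big)=\big(f(u_h)-f(u)\big)/h$. Letting $h\to 0$ and using the interior regularity theory for operators satisfying \eqref{eq: K2} to pass to the limit, one obtains that $w=u'$ solves $Lw-f'(u)\,w=0$ in $\R$ and, moreover, that $w\in C^{1,\alpha}$ for some $\alpha>2s-1$, as required in Theorem~\ref{Teorema L0}. The same regularity theory, applied to the linear equation $L\varphi=f'(u)\varphi$, upgrades any bounded solution $\widetilde{w}$ to a $C^{1,\alpha}$ function, so both $w$ and $\widetilde{w}$ meet the regularity hypotheses.

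Next I would check that we are in the first scenario of Theorem~\ref{Teorema L0}. Because $u$ is bounded, monotone, and has finite limits $\pm 1$, its derivative $u'$ is bounded and, being uniformly continuous with $\int_{\R}u'=2<\infty$, tends to $0$ at $\pm\infty$; in particular $w=u'$ cannot be bounded below by a positive constant, which rules out the second scenario and forces the first. It then remains to verify the conditions on $c=f'(u)$. Since $u$ takes values in $(-1,1)$ and $f\in C^{1,\gamma}([-1,1])$, the composition $f'(u)$ is bounded, giving $c\in L^\infty(\R)$; combining $f'\in C^{0,\gamma}$ with the $C^{1,\alpha}$ (hence $C^{0,\beta_0}$) regularity of $u$ yields $\|c\|_{C^{\beta_0}(\R)}<+\infty$. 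Finally, $u(x)\to\pm 1$ together with $f'(\pm 1)<0$ and the continuity of $f'$ give $f'(u(x))\to f'(\pm 1)<0$, so $c(x)\leq -c_0<0$ outside some interval $[-R_0,R_0]$, which is exactly \eqref{eq: V1}. With all hypotheses in place, Theorem~\ref{Teorema L0} yields $\widetilde{w}=(\text{constant})\,u'$, and the theorem follows.

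I expect the regularity input, rather than any new idea, to be the main obstacle: one must justify differentiating the nonlocal equation (equivalently, the convergence of the difference quotients in a norm strong enough to identify the limit equation), and one must guarantee that $u'$ and the generic bounded solution $\widetilde{w}$ are genuinely $C^{1,\alpha}$ with $\alpha>2s-1$. These follow from the interior elliptic regularity theory for kernels satisfying \eqref{eq: K2} cited in the introduction, but they must be invoked with care near $s=1$, where the admissible range of $\alpha$ is narrow. Once the regularity is secured, the reduction to Theorem~\ref{Teorema L0} is essentially automatic.
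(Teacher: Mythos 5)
Your proposal is correct and follows essentially the same route as the paper: both arguments reduce the statement to Theorem~\ref{Teorema L0} by taking $w=u'$ as the positive solution and $c=f'(u)$ as the potential, then verify that $c$ is bounded, H\"older continuous, and satisfies \eqref{eq: V1} because $f'(\pm 1)<0$. The only difference is one of sourcing rather than substance: the paper imports the needed regularity (that $u$ is $C^{2,2s-1+\gamma}$, so $u'$ is a bounded, positive, $C^{1,2s-1+\gamma}$ solution of the linearized equation) from Theorem~1 of \cite{CozziPassalacqua}, whereas you re-derive it via difference quotients and interior regularity; also, your step ruling out the second scenario of Theorem~\ref{Teorema L0} is unnecessary (the two scenarios are alternative sufficient hypotheses, not exclusive cases), though harmless.
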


Let us point out that condition $f'(\pm 1) <0$, which corresponds to $c=f'(u)$ being negative at infinity, is a natural assumption. Indeed, it is the same hypothesis needed to prove uniqueness (up to translations) of the layer solutions (see Theorem~1.2 in \cite{CabreSolaMorales} in the case of the half-Laplacian). Moreover, this is also the needed condition for $\pm 1$ to be local minimizers of the associated energy.

The nondegeneracy of solutions plays a very relevant role in the stability and blow up analysis for time dependent versions of equation \eqref{Ecuacion Lineal}. Furthermore, it is also important in stationary problems, as in the construction of new solutions to the semilinear equation \eqref{eq:Semilinear} around a nondegenerate one by using an implicit function argument. Indeed, Dávila, del Pino, and Musso \cite{DaviladelPinoMusso} proved the nondegeneracy of the layer solution when $L= (-\Delta)^{1/2}$ (with the extension problem) in order to construct solutions to \eqref{eq:Semilinear} that develop multiple transitions from $-1$ to $1$. In \cite{DuGuiSireWei}, Du, Gui, Sire, and Wei generalize the nondegeneracy to $s\in (1/2,1)$ and use it to show the existence of clustering-layered solutions for a fractional inhomogeneous Allen-Cahn equation.

Next, we present the third main result of this work: a uniqueness theorem in the odd setting. Let us point out that in such a case our strategy allows us to show uniqueness only among odd functions. Completely different arguments would be needed to establish uniqueness among all functions, as it occurs in \cite{FrankLenzmann} for a particular case involving the fractional Laplacian (see the end of the present introduction for more details).

\begin{theorem}{} \label{Mi teorema general impar}
Let $L$ be an integro-differential operator of the form \eqref{Tipo operador} with kernel $K$ being decreasing in $(0,+\infty)$ and satisfying the symmetry and ellipticity conditions \eqref{eq: K1} and \eqref{eq: K2} for some $s \in [1/2,1)$. Assume the potential function $c\in L^\infty(\R)$ satisfies $$ c(x) \leq -c_0 < 0 \,\,\,\,\,\, \text{in} \,\,\, \R\setminus[-R_0,R_0], \,\,\,\,\,\, \text{ and } \,\,\,\,\,\, ||c||_{C^{\beta_0}(\R)} < +\infty $$
for some positive constants $c_0$, $R_0$ and $\beta_0$.

For $\alpha>2s-1$, let $w$ and $\widetilde{w}$ be two odd $C^{1,\alpha}$ bounded solutions of the linear equation
$$ L\varphi-c(x)\varphi=0 \,\,\,\,\,\,\, \text{in} \,\,\,\R, $$
with 
$$w>0 \text{ in } (0,+\infty).$$

Then
$$ \frac{\widetilde{w}}{w} \equiv \text{constant}. $$
\end{theorem}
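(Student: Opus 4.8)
The plan is to reduce Theorem~\ref{Mi teorema general impar} to the already-established uniqueness machinery behind Theorem~\ref{Teorema L0}, adapting it to the odd setting. The central object is the quotient $\sigma := \widetilde{w}/w$, which is well defined on $(0,+\infty)$ since $w>0$ there, and which extends to an even function on $\R\setminus\{0\}$ because both $w$ and $\widetilde{w}$ are odd (so $\sigma(-x)=\widetilde{w}(-x)/w(-x)=(-\widetilde{w}(x))/(-w(x))=\sigma(x)$). The goal is to prove $\sigma$ is constant; odd multiples of $w$ then give all solutions. Following Hamel--Ros-Oton--Sire--Valdinoci, I would first derive the pointwise identity obtained by multiplying the two linear equations against each other and subtracting. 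Concretely, writing the equation for $\widetilde{w}$ as $L\widetilde{w}=c\,\widetilde{w}$ and for $w$ as $Lw=c\,w$, one computes $w(x)\,L\widetilde{w}(x)-\widetilde{w}(x)\,Lw(x)=0$, and expanding the nonlocal operators produces, after symmetrization, an equation of the form
\begin{equation*}
\PV\int_{\R}\big(\sigma(x)-\sigma(y)\big)\,w(x)\,w(y)\,K(x-y)\,dy=0,
\end{equation*}
valid at every $x$ where $w(x)>0$. This is the nonlocal analogue of the quotient equation $\div(w^2\nabla\sigma)=0$ and is the engine of the whole argument.

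The first genuinely new step relative to the positive case is controlling the growth of $\sigma$. In Theorem~\ref{Teorema L0} the boundedness of $\sigma$ came from a maximum principle in the exterior of an interval together with the hypothesis \eqref{eq: V1}. Here I would import exactly that maximum principle (from Section~\ref{sec:MaxPrinciple}): since $w,\widetilde{w}$ are bounded, $w\geq c^{-1}>0$ on any fixed interval away from the origin by positivity and continuity on $(0,+\infty)$, and $c\leq -c_0<0$ outside $[-R_0,R_0]$, one transfers the bound on $\sigma$ known on a compact interval to all of $(R_0,+\infty)$, and by evenness to all of $\R\setminus[-R_0,R_0]$. The delicate point specific to the odd problem is the behavior of $\sigma$ near the origin: because $w>0$ only on $(0,+\infty)$ and $w(0)=0$ (forced by oddness and continuity), the quotient could a priori blow up at $x=0$. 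Here the hypothesis that $K$ is decreasing on $(0,+\infty)$, which is new to this theorem, should be what rescues us: it lets one compare $w$ with a suitable odd barrier near zero and show that $w'(0)>0$ via a Hopf-type argument, so that both $w$ and $\widetilde{w}$ vanish linearly and $\sigma$ extends continuously (indeed boundedly) across the origin. I would make this precise using the $C^{1,\alpha}$ regularity with $\alpha>2s-1$, which guarantees enough smoothness for the expansions to converge.

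With $\sigma$ bounded on all of $\R$, the second ingredient is the integral estimate of Section~\ref{sec:Integrability} for $K(x-y)$ over the cross-shaped regions; this is precisely what requires $s\in[1/2,1)$ and what would fail for $s<1/2$. Combining the quotient equation with this estimate, one runs the Liouville-type energy argument: testing the quotient equation against $\sigma\,\psi_R^2$ for a cutoff $\psi_R$ at scale $R$ yields
\begin{equation*}
\iint_{\R\times\R}\big(\sigma(x)-\sigma(y)\big)^2\,w(x)\,w(y)\,K(x-y)\,\psi_R(x)^2\,dx\,dy\leq C\,\iint \big(\psi_R(x)-\psi_R(y)\big)^2\,w(x)\,w(y)\,K(x-y)\,\sigma(x)^2\,dx\,dy,
\end{equation*}
and the right-hand side tends to $0$ as $R\to\infty$ by the growth control on $\sigma$ together with the cross-region integrability of $K$. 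This forces $\sigma(x)=\sigma(y)$ wherever $K(x-y)>0$, i.e.\ everywhere by \eqref{eq: K1}, so $\sigma$ is constant. I expect the main obstacle to be the behavior at the origin: unlike the strictly positive case of Theorem~\ref{Teorema L0}, the odd positive solution degenerates at $x=0$, and showing that $\sigma$ stays bounded there (rather than having a singularity that contaminates the energy estimate) is the crux. Establishing the linear vanishing of $w$ at $0$ through the monotonicity of the kernel, and checking that the cutoff argument is unaffected by the one point where $w=0$, is where the real work lies; the rest parallels the proof of Theorem~\ref{Teorema L0} and of Theorem~\ref{Mi teorema general}.
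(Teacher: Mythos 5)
Your proposal reproduces the right skeleton (quotient, boundedness via a maximum principle, Liouville-type energy argument with the cross-region integrability of Section~\ref{sec:Integrability}), but it has a genuine gap at the heart of the energy step. In the odd setting the weight $w(x)\,w(y)\,K(x-y)$ is \emph{not} nonnegative on $\R\times\R$: since $w$ is odd, $w(x)w(y)<0$ whenever $x$ and $y$ have opposite signs. Consequently the left-hand side of your displayed inequality is not a sum of nonnegative terms, the Cauchy--Schwarz/absorption manipulation behind it is invalid, and the conclusion ``$\sigma(x)=\sigma(y)$ wherever $K(x-y)>0$'' does not follow. The paper's essential move, which your plan is missing, is Corollary~\ref{Lema ecuacion sigma impar}: using the oddness of $w,\widetilde{w}$ and the evenness of $\sigma$ and of the cutoff, the double integrals are folded onto the quarter plane $(0,+\infty)\times(0,+\infty)$, where $w(x)w(y)>0$ and the effective kernel becomes $K(x-y)-K(x+y)$. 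This difference is nonnegative \emph{precisely because $K$ is decreasing on $(0,+\infty)$} --- that, together with the odd maximum principle below, is the true role of the monotonicity hypothesis, not a Hopf-type barrier argument at the origin. The integrability estimate must then be proved for the folded kernel on $S_R^{++}$ (Corollary~\ref{Corolario integrabilidad impar}), after which the Liouville argument of Theorem~\ref{Mi teorema general} runs verbatim.

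The second gap is the boundedness of $\sigma$, including at the origin. Your transfer argument cannot invoke the general exterior maximum principle (Proposition~\ref{Lema principio maximo}): the comparison functions $\varphi_\pm=C\,w\pm\widetilde{w}$ are odd, so they cannot satisfy $\varphi_\pm\geq 0$ on $[-R_0,R_0]$, and that proposition's hypotheses fail. One needs the odd-adapted maximum principle (Proposition~\ref{Lema principio maximo impar}), which imposes conditions only on the half-line and --- crucially --- is also valid on a small interval $(0,r_0)$ near the origin: by Lemma~\ref{Lemma: AlternativeExpressionOdd} the operator on odd functions carries the zeroth-order term $-2\int_x^\infty K(z)\,dz$, which under the small-domain condition \eqref{Eq: SmallDomain} makes the effective potential strictly negative on $(0,r_0)$. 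Applying this to $\varphi_\pm$, which vanish at $0$ by oddness, gives $|\widetilde{w}|\leq C\,w$ on all of $(0,+\infty)$ with no information needed on the rate at which $w$ vanishes at the origin. Your alternative --- a Hopf-type lemma giving $w'(0)>0$ so that $\sigma$ extends continuously --- is plausible but is asserted, not proved, and it is exactly the step you yourself identify as the crux; as written, both the control of $\sigma$ near $0$ and the validity of the energy estimate remain unestablished in your proposal.
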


Note that since the integro-differential operator $L$ preserves the oddness of functions, the potential function $c$ needs to be even if we assume the problem to have existence of odd solutions. On the other hand, the monotonicity of the kernel is a natural assumption when working with odd functions in the nonlocal setting. Indeed, for the validity of the maximum principle (see Lemma~\ref{Lemma: AlternativeExpressionOdd} and section 3 of \cite{JarohsWeth}), this condition is the analogue in the odd framework to the positivity of the kernel in \eqref{eq: K1} for general functions.

As in Theorem~\ref{Nondegeneracy Layer} for the case of functions without any symmetry, we can apply the previous uniqueness result to prove qualitative properties of solutions to semilinear problems. Let us recall that a bounded solution (without loss of generality we can consider it to be bounded by $1$) to the semilinear equation \eqref{eq:Semilinear} is called \emph{ground state} if it is even, positive, and decreasing to zero at infinity. We refer to the work by Frank and Lenzmann \cite{FrankLenzmann} and references therein for existence results of such solutions. Here, we establish a partial nondegeneracy result (in the sense that we prove uniqueness for the linearized equation only among odd functions):

\begin{theorem} \label{Nondegeneracy Ground State}
Let $L$ be an integro-differential operator of the form \eqref{Tipo operador} with kernel $K$ being decreasing in $(0,+\infty)$ and satisfying the symmetry and ellipticity conditions \eqref{eq: K1} and \eqref{eq: K2}, for some $s \in [1/2,1)$. For $\gamma>0$, let $f\in C^{1,\gamma}([0,1])$ be any given nonlinearity such that $f'(0)<0$. 

Assume that $u$ is a bounded even solution to the semilinear equation \eqref{eq:Semilinear}, satisfying $u'<0$ in $(0,+\infty)$ and $\lim_{x\to\pm\infty} u(x) = 0$. 

Then, up to a multiplicative constant, $u'$ is the unique bounded odd solution to the linearized equation $L \varphi - f'(u) \varphi = 0$ in $\R$.
\end{theorem}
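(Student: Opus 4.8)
The plan is to derive Theorem~\ref{Nondegeneracy Ground State} directly from the odd uniqueness result, Theorem~\ref{Mi teorema general impar}. The key observation is that differentiating the semilinear equation~\eqref{eq:Semilinear} with respect to $x$ shows that $u'$ solves the linearized equation $L\varphi - f'(u)\varphi = 0$ in $\R$ (this uses that $L$ commutes with translations and hence with differentiation, together with the regularity of $u$ coming from $f\in C^{1,\gamma}$). Thus I must check that $u'$ satisfies the hypotheses imposed on $w$ in Theorem~\ref{Mi teorema general impar}, namely that it is an odd, bounded, $C^{1,\alpha}$ function, positive on $(0,+\infty)$, and that the potential $c(x) := f'(u(x))$ satisfies the required negativity at infinity and $C^{\beta_0}$-bound.

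First I would set $c(x) = f'(u(x))$ and verify the potential conditions. Since $u$ is even, $u'$ is odd and $c = f'(u)$ is even, consistent with the odd framework. Because $\lim_{x\to\pm\infty}u(x)=0$ and $f'(0)<0$, continuity of $f'$ gives $c(x)=f'(u(x)) \le -c_0 < 0$ for $|x| \ge R_0$, which is exactly condition~\eqref{eq: V1}; the $C^{\beta_0}$-bound on $c$ follows from $f\in C^{1,\gamma}$ composed with the boundedness and regularity of $u$. Next I would address the sign of $u'$: the hypothesis $u'<0$ in $(0,+\infty)$ means $-u'>0$ in $(0,+\infty)$, so I would take $w := -u'$ as the designated positive solution. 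Since the equation is linear, $w=-u'$ also solves the linearized equation, is odd, bounded, of class $C^{1,\alpha}$, and strictly positive on $(0,+\infty)$, matching the role of $w$ in the theorem.

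With these facts in place, let $\widetilde{w}$ be any bounded odd solution of the linearized equation; it automatically has the required $C^{1,\alpha}$ regularity by the interior Schauder-type estimates for $L$ (available under~\eqref{eq: K2}). Applying Theorem~\ref{Mi teorema general impar} with this $w$ and $\widetilde{w}$ yields $\widetilde{w}/w \equiv \text{constant}$, that is, $\widetilde{w} = \kappa\, w = -\kappa\, u'$ for some $\kappa \in \R$. Hence every bounded odd solution is a multiple of $u'$, which is precisely the claimed partial nondegeneracy.

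The routine part is verifying the hypotheses; the only genuinely delicate point is ensuring that an arbitrary bounded odd solution $\widetilde{w}$ of the linearized equation indeed enjoys the $C^{1,\alpha}$ regularity with $\alpha > 2s-1$ that the uniqueness theorem requires, since this is where one must invoke the regularity theory for~\eqref{Tipo operador} under~\eqref{eq: K2} rather than assume it. I expect this regularity bootstrap to be the main obstacle, though it is standard given the Hölder estimates cited in the introduction; the rest of the argument is a direct reduction to Theorem~\ref{Mi teorema general impar}.
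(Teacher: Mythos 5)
Your proposal is correct and takes essentially the same route as the paper: set $c(x)=f'(u(x))$, check that $c$ is even, bounded, H\"older continuous and satisfies \eqref{eq: V1} via $f'(0)<0$ and $u\to 0$ at infinity, take $w=-u'$ as the positive bounded odd solution on $(0,+\infty)$, and reduce to Theorem~\ref{Mi teorema general impar}. The regularity point you flag is handled in the paper by citing Proposition~1.1 of Frank--Lenzmann together with nonlocal regularity theory (giving $u\in C^{2,2s-1+\gamma}$, hence $u'\in C^{1,\alpha}$ with $\alpha>2s-1$), which is the same bootstrap you describe.
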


As in the nondegeneracy result for layer solutions, the condition $f'(0) <0$ is a natural assumption. Indeed, it is a necessary condition in order for $v\equiv 0$ to be a local minimizer of the associated energy.

The most important result in the literature dealing with nondegeneracy of ground states in the nonlocal framework is due to Frank and Lenzmann \cite{FrankLenzmann}. Unlike us, they were able to establish the full nondegeneracy (uniqueness for the linearized equation among all functions) in the particular case $L=(-\Delta)^s$ and $f$ being a polynomial nonlinearity (see Lemma~C.3 from \cite{FrankLenzmann}) as we explain next. An important point in their strategy is to note that the operator $L-f'(u)$ preserves odd/even symmetry. Thus, both the odd and even parts of any given solution of the linearized problem are also solutions, and a separated analysis can be done for each one. First, they prove the uniqueness among odd functions by using the heat kernel for the fractional Laplacian. Next, they show that the unique even solution is the trivial one, which is the most difficult step. In order to do it, they develop a delicate spectral theory for fractional Schrödinger operators (where the local extension problem and the polynomial structure of the nonlinearity play a crucial role).  Finally, the uniqueness among all functions follows from the previous results. The nondegeneracy of ground states turns out to be very important since they use it to prove their uniqueness result by using an implicit function argument and the well known result for the local case ($s=1$).

Finally, let us comment that the strategy to prove Theorem~\ref{Mi teorema general impar}  follows the same lines as the one of Theorem~\ref{Teorema L0}. Nevertheless, there are some difficulties we have to overcome. First, we need to take advantage of the odd symmetry to find an equation for the quotient of two solutions (see Corollary \ref{Lema ecuacion sigma impar}) which involves only the values of the functions in $(0,\infty)$, where the first solution $w$ is known to be positive. Next, we need to assure the quotient to be well-defined at the origin, where the denominator vanishes. We can accomplish it by using a maximum principle in small domains around the origin and taking into account that the numerator also vanishes at this point.

The paper is organized as follows. In Section~\ref{sec:Preliminary} we present the equation satisfied by the quotient of two solutions to \eqref{Ecuacion Lineal}-\eqref{Tipo operador}. Section~\ref{sec:MaxPrinciple} is devoted to show the maximum principles in the exterior of an interval. In Section~\ref{sec:Integrability} we give some estimates involving the integral of the kernel in cross-shaped unbounded domains. Finally, in Sections~\ref{sec:ProofMainResults} and \ref{sec:ProofMainResultsOdd} we prove the main results of the paper.


\section{Preliminary results: An equation for the quotient of solutions} \label{sec:Preliminary}
In this section we include a few preliminary algebraic computations that will be employed in the
proof of the main theorems. They are inspired by the computations done by Hamel, Ros-Oton, Sire, and Valdinoci in \cite{HamelRosOtonSireValdinoci}.

In the local framework (see proof of Theorem~1.8 in Section~4 of \cite{BerestyckiCaffarelliNiremberg-Qualitative}), it is well known that given a positive supersolution $w$ and a solution $\tilde{w}$ to the linear equation $-\Delta \varphi - c(x) \varphi = 0$, the quotient $\sigma := \tilde{w}/w$ satisfies $\sigma \div(w^2\nabla \sigma) \geq 0$. Thus, multiplying by $\tau^2$, where $\tau$ is any cut-off function, and integrating in the whole space, one arrives at
\begin{equation} \label{Eq:LocalQuotient}
	2\int_{\R^n} \tau^2(x) w^2(x) |\nabla \sigma(x)|^2 dx \leq - \int_{\R^n} w^2(x) \nabla (\tau^2(x)) \cdot \nabla (\sigma^2(x)) dx.
\end{equation}
Similar computations can also be done, by using the extension problem, when the Laplacian is replaced by the fractional Laplacian (see \cite{CabreSolaMorales, CabreSireI}).

In the general integro-differential case we establish the following:

\begin{lemma}\label{Lema ecuacion sigma}
	Let $L$ be an integro-differential operator of the form \eqref{Tipo operador}. Assume that $w$ and $\sigma$ are two smooth functions such that $w$ and $\tilde{w}:=\sigma w$ satisfy
	$$ w\left(Lw-cw\right)\geq 0 \,\,\,\,\,\,\, \text{in} \,\,\R $$
	and
	$$ \tilde{w}\left(L\tilde{w}-c\tilde{w}\right) \leq 0 \,\,\,\,\,\,\, \text{in} \,\,\R, $$
	respectively, for some potential function $c=c(x)$.
	
	Then, given any function $\tau\in C_c^\infty(\R)$,
	\begin{align*}
	\int_\R\int_\R \big( \sigma(x)-\sigma(y) \big)^2&\big(\tau^2(x)+\tau^2(y) \big)\,w(x)\,w(y)\,K(x-y)\,dx dy \\ &\hspace{-20mm}\leq -
	\int_\R\int_\R \big( \sigma^2(x)-\sigma^2(y) \big)\big( \tau^2(x)-\tau^2(y)\big)w(x)\,w(y)\,K(x-y)\,dx dy.
	\end{align*}
	Moreover, if $w\left(Lw-cw\right) = \tilde{w}\left(L\tilde{w}-c\tilde{w}\right) = 0$, equality holds in the previous expression.
\end{lemma}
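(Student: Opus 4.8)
The plan is to fold the two hypotheses into a single pointwise inequality and then symmetrize the resulting double integral. The first observation is that the zeroth-order terms cancel: since $\tilde{w}=\sigma w$ we have $\tilde{w}^2=\sigma^2 w^2$, so $\tilde{w}(-c\tilde{w})-\sigma^2 w(-cw)=0$. Multiplying the first hypothesis by the nonnegative factor $\sigma^2$ and subtracting it from the second yields the pointwise inequality
\begin{equation*}
\tilde{w}\,L\tilde{w}-\sigma^2\,w\,Lw=\tilde{w}(L\tilde{w}-c\tilde{w})-\sigma^2 w(Lw-cw)\leq 0 \quad\text{in }\R,
\end{equation*}
with equality whenever both bracketed quantities vanish.

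Next I would expand the left-hand side using the definition of $L$. Writing $\tilde{w}(x)=\sigma(x)w(x)$ and $\tilde{w}(y)=\sigma(y)w(y)$ inside the integral and subtracting, the diagonal terms $\sigma^2(x)w^2(x)$ cancel and one is left with the pointwise identity
\begin{equation*}
\tilde{w}(x)\,L\tilde{w}(x)-\sigma^2(x)\,w(x)\,Lw(x)=\PV\int_\R w(x)\,w(y)\,\sigma(x)\big(\sigma(x)-\sigma(y)\big)K(x-y)\,dy.
\end{equation*}
I would then multiply by the nonnegative cut-off $\tau^2(x)$ and integrate in $x$; since $\tau$ has compact support the left-hand side integrates to a nonpositive number. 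Using that $K$ is even, the change of variables $x\leftrightarrow y$ symmetrizes the double integral and gives
\begin{equation*}
0\geq \frac12\int_\R\int_\R \big(\sigma(x)-\sigma(y)\big)\big(\tau^2(x)\sigma(x)-\tau^2(y)\sigma(y)\big)w(x)\,w(y)\,K(x-y)\,dx\,dy.
\end{equation*}

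The last ingredient is the elementary algebraic identity
\begin{equation*}
2\big(a-b\big)\big(pa-qb\big)=\big(a-b\big)^2\big(p+q\big)+\big(a^2-b^2\big)\big(p-q\big),
\end{equation*}
applied with $a=\sigma(x)$, $b=\sigma(y)$, $p=\tau^2(x)$, $q=\tau^2(y)$. Substituting it into the symmetrized inequality splits the integrand into exactly the two terms appearing in the statement and produces the claimed inequality at once; when $w(Lw-cw)=\tilde{w}(L\tilde{w}-c\tilde{w})=0$ the displayed inequality becomes an equality, hence so does the conclusion.

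I expect the only genuine difficulty to be analytic rather than algebraic: the single integral defining $\tilde{w}\,L\tilde{w}$ exists only as a principal value, because near the diagonal the factor $\sigma(x)-\sigma(y)$ provides only first-order cancellation against the kernel singularity $|x-y|^{-(1+2s)}$, which fails to be integrable for $s\geq 1/2$. I would therefore perform the multiplication by $\tau^2$, the use of Fubini, and the symmetrizing change of variables at the level of the truncated integrals over $\{|x-y|>\varepsilon\}$, where every step is plainly legitimate, and only afterwards let $\varepsilon\to 0$. What makes the limit valid is that after symmetrization the integrand carries the factors $\big(\sigma(x)-\sigma(y)\big)^2$ and $\big(\sigma^2(x)-\sigma^2(y)\big)\big(\tau^2(x)-\tau^2(y)\big)$, both of which vanish to second order on the diagonal and thus render the double integrals absolutely convergent for every $s\in(0,1)$, while the compact support of $\tau$ together with the decay of $K$ controls the behaviour at infinity. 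Dominated convergence then passes the truncated identity to the limit and closes the argument.
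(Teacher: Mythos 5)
Your proof is correct and follows essentially the same route as the paper: both arguments cancel the potential by combining the two hypotheses (your pointwise inequality $\tilde w\,L\tilde w-\sigma^2 w\,Lw\le 0$ is exactly the paper's $\sigma(\tilde w\,Lw-w\,L\tilde w)\ge 0$), then multiply by $\tau^2$, integrate, and symmetrize in $x$ and $y$, with your polarization identity $2(a-b)(pa-qb)=(a-b)^2(p+q)+(a^2-b^2)(p-q)$ simply making explicit the algebra the paper delegates to the reference of Hamel, Ros-Oton, Sire, and Valdinoci. Your $\varepsilon$-truncation of the principal value to justify Fubini and the symmetrization is a welcome extra precision that the paper's proof leaves implicit.
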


This result, which is a generalization of Lemma 2.1 from \cite{HamelRosOtonSireValdinoci}, is a nonlocal analogue to \eqref{Eq:LocalQuotient}. In Section~\ref{sec:ProofMainResults}, we will use it to prove that the quotient of two solutions to the linear equation \eqref{Ecuacion Lineal} is constant.

\begin{proof}
	First, combining $w (Lw-cw) \geq 0$ and  $\tilde{w} (L\tilde{w}-c\tilde{w}) \leq 0$, we can easily check that $\sigma (\tilde{w} Lw - w L\tilde{w}) \geq 0$. Then, multiplying by $\tau^2$, where $\tau$ is any cut-off function, and repeating the algebraic computations done in \cite{HamelRosOtonSireValdinoci} we find that
	\begin{align*}
	\int_\R\int_\R \big( \sigma(x)-\sigma(y) \big)^2&\tau^2(x)\,w(x)\,w(y)\,K(x-y)\,dx dy \\ &\hspace{-20mm}\leq -
	\int_\R\int_\R \big( \sigma(x)-\sigma(y) \big)\big( \tau^2(x)-\tau^2(y)\big)\sigma(x)\,w(x)\,w(y)\,K(x-y)\,dx dy.
\end{align*}
	Finally, symmetrizing in both $x$ and $y$ we conclude the proof.
\end{proof}

As a consequence of the previous lemma, we can also find a useful identity for the quotient of two odd solutions to the linear equation \eqref{Ecuacion Lineal}. In such a case, all the integrals can be written in $(0,+\infty)$ by taking advantage of the symmetry of the functions.

\begin{corollary} \label{Lema ecuacion sigma impar}
	Let $L$ be an integro-differential operator of the form \eqref{Tipo operador}. Assume that $w$ and $\sigma$ are two smooth functions such that both $w$ and $\tilde{w}:=\sigma w$ are odd solutions to the linear equation
	$$ L\varphi-c(x)\varphi=0, \,\,\,\,\,\,\, \text{in} \,\,\R, $$
	for some even potential function $c=c(x)$. 
	
	Then, given any even function $\tau\in C_c^\infty(\R)$,
	\begin{align*}
	\int_0^\infty\int_0^\infty \big( \sigma(x)-\sigma(y) \big)^2&\big(\tau^2(x)+\tau^2(y) \big)\,w(x)\,w(y)\,\big\{K(x-y)-K(x+y) \big\}\,dx dy \\ &\hspace{-43mm}= -
	\int_0^\infty\int_0^\infty \big( \sigma^2(x)-\sigma^2(y) \big)\big( \tau^2(x)-\tau^2(y)\big)w(x)\,w(y)\,\big\{K(x-y)-K(x+y) \big\}\,dx dy.
	\end{align*}
\end{corollary}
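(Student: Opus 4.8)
The plan is to obtain the identity directly from Lemma~\ref{Lema ecuacion sigma}, exploiting that here both $w$ and $\tilde w=\sigma w$ are exact solutions. Since $w(Lw-cw)=\tilde w(L\tilde w-c\tilde w)=0$, the lemma applies in its \emph{equality} form, so the full-line identity
\begin{align*}
\int_\R\int_\R \big(\sigma(x)-\sigma(y)\big)^2\big(\tau^2(x)+\tau^2(y)\big)&w(x)w(y)K(x-y)\,dxdy \\
&\hspace{-30mm}= -\int_\R\int_\R \big(\sigma^2(x)-\sigma^2(y)\big)\big(\tau^2(x)-\tau^2(y)\big)w(x)w(y)K(x-y)\,dxdy
\end{align*}
holds. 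The key preliminary observation is that $\sigma=\tilde w/w$ is \emph{even}: being the ratio of two odd functions, $\sigma(-x)=(-\tilde w(x))/(-w(x))=\sigma(x)$ wherever $w\neq 0$, and this extends to all of $\R$ by continuity. Together with the oddness of $w$, the evenness of $\tau$, and the symmetry $K(-z)=K(z)$ from~\eqref{eq: K1}, these are all the ingredients needed.

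Next I would split $\R\times\R$ into its four coordinate quadrants and fold each onto $(0,\infty)\times(0,\infty)$ via $x\mapsto\pm x$, $y\mapsto\pm y$. The coefficient built from $\sigma$ and $\tau$ is even in each variable separately, hence invariant under every sign change; the factor $w(x)w(y)$ is invariant when both signs are flipped but changes sign when exactly one is flipped; and $K(x-y)$ becomes $K(x-y)$ under a double flip but $K(x+y)$ under a single flip, by the evenness of $K$. Carrying this out, the two same-sign quadrants ($x,y>0$ and $x,y<0$) each reproduce the $K(x-y)$ integral over $(0,\infty)^2$, while the two mixed-sign quadrants each contribute the $K(x+y)$ integral with an overall minus sign coming from $w(-x)w(y)=-w(x)w(y)$. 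Summing, each side of the full-line identity collapses to twice the corresponding integral over $(0,\infty)^2$ with kernel $K(x-y)-K(x+y)$.

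Finally, the common factor $2$ cancels and one reads off exactly the claimed identity. The step demanding the most care is the sign bookkeeping in the mixed-sign quadrants: it is precisely the oddness of $w$ that turns $K(x-y)$ into the \emph{difference} $K(x-y)-K(x+y)$, and a single dropped sign would wrongly produce the sum instead. One should also note that, although $w(0)=0$ makes $\sigma$ a priori delicate at the origin, this is harmless here, since the reduced integrals live on $(0,\infty)^2$ and $\sigma$ is assumed smooth throughout, so no contribution from the origin arises.
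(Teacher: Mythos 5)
Your proposal is correct and follows essentially the same route as the paper: invoke Lemma~\ref{Lema ecuacion sigma} in its equality form (both $w$ and $\tilde w$ being exact solutions) and then fold the full-line integrals onto $(0,\infty)\times(0,\infty)$ using the evenness of $\sigma$ and $\tau$, the oddness of $w$, and $K(-z)=K(z)$, which produces the kernel $K(x-y)-K(x+y)$ and a factor of $2$ on each side that cancels. The only cosmetic difference is that you fold both variables at once via the four quadrants, while the paper folds one variable at a time; your explicit justification that $\sigma$ is even is a welcome detail the paper leaves implicit.
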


Note that the previous identity is completely identical to the one in the general case but with integrals now computed in the half-line instead of the whole line, and with $K(x-y)-K(x+y)$ taking the role of $K(x-y)$.

\begin{proof}[Proof of Corollary~\ref{Lema ecuacion sigma impar}]
	We use the symmetry properties of the functions ($\sigma$ and $\tau$ are even while $w$ is odd) to rewrite the identity from Lemma \ref{Lema ecuacion sigma} in terms of integrals computed only in $\R^+$. That is,
	
	\begin{align*}
	\int_\R\int_\R &\big( \sigma(x)-\sigma(y) \big)^2\big(\tau^2(x)+\tau^2(y) \big)\,w(x)\,w(y)\,K(x-y)\,dx dy \\
	&=\int_\R\int_0^\infty \big( \sigma(x)-\sigma(y) \big)^2\big(\tau^2(x)+\tau^2(y) \big)\,w(x)\,w(y)\,\left[K(x-y)-K(x+y)\right]\,dx dy\\
	&= 2 \int_0^\infty\int_0^\infty \big( \sigma(x)-\sigma(y) \big)^2\big(\tau^2(x)+\tau^2(y) \big)\,w(x)\,w(y)\\
	&\hspace{80mm} \cdot\left[K(x-y)-K(x+y)\right]\,dx dy
	\end{align*}
	and
	\begin{align*}
	\int_\R\int_\R &\big( \sigma^2(x)-\sigma^2(y) \big)\big(\tau^2(x)-\tau^2(y) \big)\,w(x)\,w(y)\,K(x-y)\,dx dy \\
	&=\int_\R\int_0^\infty \big( \sigma^2(x)-\sigma^2(y) \big)\big(\tau^2(x)-\tau^2(y) \big)\,w(x)\,w(y)\,\left[K(x-y)-K(x+y)\right]\,dx dy\\
	&= 2 \int_0^\infty\int_0^\infty \big( \sigma^2(x)-\sigma^2(y) \big)\big(\tau^2(x)-\tau^2(y) \big)\,w(x)\,w(y)\\
	&\hspace{80mm} \cdot\left[K(x-y)-K(x+y)\right]\,dx dy.
	\end{align*}
	From this, we conclude the desired result by applying Lemma \ref{Lema ecuacion sigma}.
\end{proof}


\section{Some maximum principles in the exterior of an interval} \label{sec:MaxPrinciple}

In this section we prove two maximum principles in the exterior of an interval for some linear equations driven by an integro-differential operator plus a zeroth order term. The first result applies to functions without any symmetry, while the second one concerns odd functions. They will be the fundamental tool in Section~\ref{sec:ProofMainResults} and \ref{sec:ProofMainResultsOdd} to show that the quotient of two bounded solutions to equation \eqref{Ecuacion Lineal} is also bounded.

\begin{proposition}{} \label{Lema principio maximo}
	Let $L$ be an integro-differential operator of the form \eqref{Tipo operador} satisfying conditions \eqref{eq: K1} and \eqref{eq: K3} for some $1/2\leq \underline{s} \leq \overline{s}<1$. Assume that the potential function $c=c(x)$ satisfies \eqref{eq: V1} for some positive constants $R_0$ and $c_0$. 
	
	For $\alpha> 2\overline{s}-1$, let $\varphi$ be a bounded and $C^1$ function in $\R$ such that $[\varphi']_{C^{\alpha}(\R)} < +\infty$, 
	$$L\varphi-c\varphi \geq 0 \,\,\,\,\text{in } \,\R\setminus[-R_0,R_0],$$
	and
	$$\varphi \geq 0 \,\,\,\,\text{in } \,[-R_0,R_0].$$
	
	Then
	$$ \varphi \geq 0 \,\,\,\,\text{in } \,\R. $$
\end{proposition}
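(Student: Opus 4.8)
The plan is to argue by contradiction via a nonlocal touching-point argument, exploiting the translation invariance of $L$ together with the sign of the potential at infinity. Suppose the conclusion fails, so that $m := \inf_\R \varphi < 0$. Since $\varphi \geq 0$ on $[-R_0,R_0]$, any minimizing sequence must eventually leave this interval, so I would pick $x_k$ with $\varphi(x_k) \to m$ and $|x_k| > R_0$ and distinguish whether $(x_k)$ admits a bounded subsequence or diverges.

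First, suppose the infimum is essentially attained, i.e. (after passing to a subsequence) $x_k \to x_\infty$ with $\varphi(x_\infty) = m$. Since $\varphi(\pm R_0) \geq 0 > m$, necessarily $|x_\infty| > R_0$, so both the inequality $L\varphi - c\varphi \geq 0$ and the bound $c(x_\infty) \leq -c_0$ hold at $x_\infty$. Because $x_\infty$ is a global minimum, $\varphi(x_\infty) - \varphi(y) \leq 0$ for all $y$, and since $K > 0$ I obtain
\[
L\varphi(x_\infty) = \PV \int_\R \big(\varphi(x_\infty) - \varphi(y)\big) K(x_\infty - y)\, dy \leq 0.
\]
On the other hand, the equation together with the signs $c(x_\infty) \leq -c_0 < 0$ and $\varphi(x_\infty) = m < 0$ gives $L\varphi(x_\infty) \geq c(x_\infty)\varphi(x_\infty) \geq -c_0 m > 0$, a contradiction.

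The delicate case is when the infimum is not attained, i.e. $x_k \to +\infty$ (or $-\infty$). Here I would use that $L$ commutes with translations: setting $\varphi_k := \varphi(\cdot + x_k)$, the uniform bounds $\|\varphi\|_{L^\infty} < \infty$ and $[\varphi']_{C^{\alpha}(\R)} < \infty$ make $(\varphi_k)$ bounded in $C^{1,\alpha}$, so by Arzelà--Ascoli a subsequence converges in $C^1_{\loc}(\R)$ to a limit $\varphi_\infty$ with $\varphi_\infty \geq m$ everywhere and $\varphi_\infty(0) = m$; thus $0$ is a global minimum of $\varphi_\infty$ and $L\varphi_\infty(0) \leq 0$ exactly as above. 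The crux is to pass to the limit in the equation at the origin. Since $L\varphi_k(0) = L\varphi(x_k)$, I claim $L\varphi_k(0) \to L\varphi_\infty(0)$: splitting the principal value into $|y| < 1$ and $|y| \geq 1$, on the inner region the second-order Taylor remainder (controlled by $[\varphi_k']_{C^{\alpha}} = [\varphi']_{C^{\alpha}}$, with the odd linear term killed by the symmetry of $K$) is integrable against $K(y) \lesssim |y|^{-1-2\overline{s}}$ precisely because $\alpha > 2\overline{s} - 1$, while on the outer region $K$ is integrable at infinity and $\varphi_k$ is uniformly bounded; in both regions dominated convergence applies. Passing to the limit in $L\varphi(x_k) \geq c(x_k)\varphi(x_k)$ and using $c(x_k) \leq -c_0$ with $\varphi(x_k) \to m < 0$ yields $L\varphi_\infty(0) \geq -c_0 m > 0$, again contradicting $L\varphi_\infty(0) \leq 0$.

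The \emph{main obstacle} is the non-attained case, and specifically the justification that $L\varphi_k(0) \to L\varphi_\infty(0)$. The hypothesis $\alpha > 2\overline{s} - 1$ enters exactly here, furnishing a uniform integrable majorant near the singularity so that the limiting minimum point inherits the differential inequality; the sign of the potential then closes the contradiction. A secondary point to handle with care is the extraction of the convergent subsequence and the verification that $\varphi_\infty(0) = m$ is a genuine global minimum, which follows immediately from $\varphi_k \geq m$ pointwise.
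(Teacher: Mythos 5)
Your argument is correct, but it takes a genuinely different route from the paper in the crucial case where the infimum is not attained. The paper never extracts a limit profile: it works directly at almost-minimum points $x_k$ (with $\varphi(x_k)-\inf_\R\varphi\leq 1/k$), writes $L\varphi(x_k)$ in second-difference form, and splits at a $k$-dependent radius $\delta=k^{-1/2}$; the tail part is bounded by $\tfrac{2}{k}\int_{|z|>\delta}K(z)\,dz\lesssim \tfrac1k(\delta^{-2\underline{s}}+\delta^{-2\overline{s}})$ using only the almost-minimum property, the singular part by $C\int_0^\delta|z|^{1+\alpha}K(z)\,dz$ using $[\varphi']_{C^\alpha(\R)}<+\infty$, and both vanish as $k\to\infty$ because $\alpha>2\overline{s}-1$ and $\overline{s}<1$, leaving $0\leq c_0\inf_\R\varphi<0$ in the limit. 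You instead translate, extract a $C^1_{\loc}$ limit $\varphi_\infty$ that attains the infimum at the origin, and reduce the non-attained case to the attained one by proving $L\varphi(x_k)\to L\varphi_\infty(0)$ via dominated convergence; note that the same two ingredients (the exponent $\alpha>2\overline{s}-1$ for the integrable majorant near the singularity, boundedness of $\varphi$ plus integrability of $K$ at infinity from \eqref{eq: K3} for the tails, and the sign condition \eqref{eq: V1} to close) play identical roles in both proofs, and neither uses a lower bound on the kernel. Your route is conceptually cleaner — the hard case literally becomes the easy case — at the cost of compactness machinery: in particular, your claim that $(\varphi_k)$ is bounded in $C^{1,\alpha}$ silently uses the interpolation fact that $\|\varphi\|_{L^\infty(\R)}<\infty$ together with $[\varphi']_{C^{\alpha}(\R)}<+\infty$ forces $\|\varphi'\|_{L^\infty(\R)}<\infty$ (this is not a stated hypothesis; it is true and standard, but it should be said). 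The paper's route is fully quantitative, uses only one-sided pointwise estimates, avoids subsequences altogether, and for that reason is the version that is invoked verbatim again in the odd setting of Proposition~\ref{Lema principio maximo impar}, where the effective (regional) operator is no longer translation invariant and a translation-compactness argument would require additional care.
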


For simplicity, we are assuming $1/2\leq \underline{s} \leq \overline{s}<1$ since this is the range in which we are applying the result. However, the proof can be easily adapted to $0< \underline{s} \leq \overline{s}<1$ and any dimension (with the ball taking the role of the interval). Moreover, we point out that the negativity of the potential function $c$ at infinity, which is an assumption in some parts of Theorem~\ref{Teorema L0}, originates on this maximum principle.

\begin{proof}[Proof of Proposition \ref{Lema principio maximo}]
	Assume the result to be false. Then, the infimum of $\varphi$ is negative. In the case it is achieved, the contradiction comes directly from evaluating the operator $L\varphi-c\varphi$ at a point where such a minimum is attained. On the contrary, if the infimum is not achieved, we can construct a sequence of points $x_k \not\in [-R_0,R_0]$ where $\varphi$ takes negative values and approaches the infimum in the following way:
	\begin{align} \label{ecuacion principio maximo 3}
		\varphi(x_k) -  \varphi(x) \leq\varphi(x_k) - \inf_\R\,\varphi \leq \frac{1}{k}\,\,\, \text{for all } \,x\in\R.
	\end{align}
		
	Next, we evaluate $L\varphi-c\varphi$ at that sequence of points. In order to do it,
	we split the integro-differential term of the operator into two parts, and we estimate
	each one separately. That is,
	\begin{align*}
		L\,\varphi(x_k) &= \int_{-\infty}^{\infty} \big(\varphi(x_k) -  \varphi(y)\big) K(x_k-y) \,dy = \int_{-\infty}^\infty \big(\varphi(x_k) -  \varphi(x_k-z)\big) K(z) \,dz \\	
		&= \int_{\delta}^{\infty} \big(2\varphi(x_k) -  \varphi(x_k-z)-\varphi(x_k+z)\big) K(z) \,dz  \\ & \hspace{5mm} + \int_{0}^{\delta}\big(2\varphi(x_k) -  \varphi(x_k-z)-\varphi(x_k+z)\big) K(z) \,dz,
	\end{align*}
	where $\delta$ is a positive parameter to be chosen later. Here, we have used the odd symmetry of the kernel $K$ to write the operator in terms of the second order differences.
		
	Let us first estimate the term of the tails. If we use condition \eqref{ecuacion principio maximo 3} and the ellipticity assumption \eqref{eq: K3} we obtain
	\begin{align*}
		\int_{\delta}^{\infty} &\big(2\varphi(x_k) -  \varphi(x_k-z)-\varphi(x_k+z)\big) K(z) \,dz \leq \frac{2}{k} \int_{\delta}^{\infty} K(z)\, dz \\
		&\hspace{30mm} \leq \frac{C}{k} \left( \int_\delta^\infty \frac{1}{z^{1+2\underline{s}}}dz + \int_\delta^\infty \frac{1}{z^{1+2\overline{s}}} dz\right) \leq \frac{C}{k} \left(\delta^{-2\overline{s}}+\delta^{-2\underline{s}} \right).
	\end{align*}

	For the second integral we use the regularity of $\varphi$. Since $\varphi'$ is globally Hölder with exponent $\alpha>2\overline{s}-1\geq 2\underline{s}-1$, the second order incremental quotients satisfy
	$$|\varphi(x_k+z)+\varphi(x_k-z)-2\varphi(x_k)|\leq C |z|^{\alpha+1}.$$
	Therefore, using this estimate and the ellipticity assumption \eqref{eq: K3} we get
	\begin{align*}
		\int_{0}^{\delta}&\big(2\varphi(x_k) -  \varphi(x_k-z)-\varphi(x_k+z)\big) K(z) \,dz \leq C \int_{0}^{\delta} |z|^{\alpha+1} \, K(z) \, dz \\
		&\hspace{20mm} \leq C \left(\int_\delta^\infty \frac{z^{1+\alpha}}{z^{1+2\underline{s}}}dz + \int_\delta^\infty \frac{z^{1+\alpha}}{z^{1+2\overline{s}}} dz \right) \leq C \left(\delta^{\alpha+1-2\overline{s}}+\delta^{\alpha+1-2\underline{s}} \right).
	\end{align*}
		
	On the other hand, we use assumption \eqref{eq: V1} together with conditions $\varphi(x_k)<0$ and $\varphi(x_k) \leq \frac{1}{k}+\inf_\R\,\varphi$ to bound the zeroth order term as follows
	$$ -c(x_k)\,\varphi(x_k) \leq c_0\,\varphi(x_k) \leq \frac{c_0}{k} + c_0 \,\inf_\R\,\varphi. $$
		
	Combining all this and taking $\delta=k^{-1/2}$, we find that
	\begin{align*}
		0 &\leq L\varphi(x_k) -c(x_k)\,\varphi(x_k)  \\ &\leq C\left(k^{\overline{s}-1} + k^{(2\overline{s}-1-\alpha)/2} + k^{\underline{s}-1} + k^{(2\underline{s}-1-\alpha)/2}\right) + \frac{c_0}{k} + c_0 \,\inf_\R\,\varphi \,\,\,\,\ \text{for all }\,k\in{\Z^+}.
	\end{align*}

	Finally, by letting $k$ tend to infinity and using the assumptions $1/2\leq \underline{s}\leq \overline{s}<1$ and $\alpha>2\overline{s}-1\geq 2\underline{s}-1$ we conclude
	$$ 0 \leq c_0\,\inf_\R\varphi, $$
	which contradicts the infimum being	negative. 
\end{proof}

Odd functions are defined by their values in $(0,+\infty)$. We want to take advantage of this property to find an alternative and more useful expression for integro-differential operators when acting on such functions.

\begin{lemma} \label{Lemma: AlternativeExpressionOdd}
	Let $L$ be an integro-differential operator of the form \eqref{Tipo operador}, and let $\varphi$ be an odd function. Then,
	$$ L \varphi(x) = \int_0^\infty \big(\varphi(x)-\varphi(y)\big) \big(K(x-y)-K(x+y)\big)\,dy +  \left(2\, \int_x^\infty K(z) \, dz\right) \varphi(x).$$
\end{lemma}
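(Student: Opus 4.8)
The plan is to prove the identity by elementary manipulation of the defining integral, exploiting the oddness of $\varphi$ and the evenness of the kernel. Throughout I take $x>0$, so that the tail integral $\int_x^\infty K(z)\,dz$ appearing on the right-hand side is finite (the kernel is singular only at the origin, and for $x>0$ that singularity lies outside the integration range). Starting from
$$ L\varphi(x) = \PV\int_{-\infty}^\infty \big(\varphi(x)-\varphi(y)\big)\,K(x-y)\,dy, $$
I would first split the domain of integration into $(-\infty,0)$ and $(0,\infty)$. For $x>0$ the principal-value singularity at $y=x$ lies entirely in the second piece, whereas the first piece is an ordinary (absolutely convergent) integral.

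Next I would change variables $y\mapsto -y$ in the integral over $(-\infty,0)$ and use $\varphi(-y)=-\varphi(y)$ together with $K(-z)=K(z)$ from \eqref{eq: K1} to rewrite that piece as $\int_0^\infty \big(\varphi(x)+\varphi(y)\big)K(x+y)\,dy$. Adding the two pieces and regrouping the integrand into the part proportional to $\varphi(x)$ and the part proportional to $\varphi(y)$ yields
$$ L\varphi(x) = \varphi(x)\int_0^\infty \big(K(x-y)+K(x+y)\big)\,dy + \int_0^\infty \varphi(y)\,\big(K(x+y)-K(x-y)\big)\,dy, $$
with the P.V. understood on the $K(x-y)$ contribution.

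To match the claimed expression I expand its first term likewise into a $\varphi(x)$-part and a $\varphi(y)$-part; the $\varphi(y)$-parts coincide immediately. It then remains only to identify the coefficients of $\varphi(x)$, that is, to verify
$$ \int_0^\infty \big(K(x-y)+K(x+y)\big)\,dy = \int_0^\infty \big(K(x-y)-K(x+y)\big)\,dy + 2\int_x^\infty K(z)\,dz, $$
which reduces to $\int_0^\infty K(x+y)\,dy = \int_x^\infty K(z)\,dz$, an immediate consequence of the substitution $z=x+y$.

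The one point requiring care is the principal value itself: since $K(z)\sim|z|^{-1-2s}$ with $2s\geq 1$, the first-order difference $\big(\varphi(x)-\varphi(y)\big)K(x-y)$ is not absolutely integrable near $y=x$, so the splitting and regrouping above should be carried out on the symmetric truncations $\{|x-y|>\varepsilon\}$ and the limit $\varepsilon\to 0^+$ taken only at the end. The $K(x+y)$ terms are regular for $x>0$ and are untouched by this limit, so the rearrangement merely relocates the fixed, symmetric P.V. singularity of the $K(x-y)$ term; the $C^{1,\alpha}$ regularity of $\varphi$ with $\alpha>2s-1$ guarantees the symmetrized limit exists, exactly as in the definition of $L$. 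I expect this principal-value bookkeeping to be the only genuine obstacle, the rest being a direct computation.
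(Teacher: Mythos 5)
Your proof is correct and follows essentially the same route as the paper: split the integral at the origin, reflect via $y\mapsto -y$ using the oddness of $\varphi$ and the symmetry of $K$, and recombine, identifying $\int_0^\infty K(x+y)\,dy=\int_x^\infty K(z)\,dz$. The only difference is cosmetic: you regroup into separate coefficients of $\varphi(x)$ and $\varphi(y)$ and match them, which is what forces the truncation bookkeeping you describe (those pieces are individually divergent, and a principal value cannot cure a positive singularity), whereas the paper keeps the difference $\varphi(x)-\varphi(y)$ intact throughout so the principal-value truncation rides along untouched.
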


Note that this alternative expression consists on a regional-type integro-differential operator in $(0,+\infty)$ plus a zeroth order term. This structure is more suitable to work with, and it will be used to establish a maximum principle in the odd setting.  As it occurs in Corollary~\ref{Lema ecuacion sigma impar}, in the odd framework $K(x-y)-K(x+y)$ takes the role of $K(x-y)$. For this reason it is natural to impose the condition $K(x-y)-K(x+y) \geq 0$ for each $x,y\in (0,+\infty)$ when working with odd functions. Actually, such a condition turns out to be equivalent to $K$ being nonincreasing in $(0,+\infty)$.

\begin{proof}[Proof of Lemma~\ref{Lemma: AlternativeExpressionOdd}]
	If we split the integral into two terms and use the odd symmetry we arrive at
	\begin{align*}
		L\varphi(x) &= \int_{-\infty}^\infty \big(\varphi(x)-\varphi(y)\big)\,K(x-y)\,dy  \\
		&= \int_{-\infty}^0 \big(\varphi(x)-\varphi(y)\big)\,K(x-y)\,dy + \int_{0}^\infty \big(\varphi(x)-\varphi(y)\big)\,K(x-y)\,dy \\
		&= \int_{0}^\infty \big(\varphi(x)-\varphi(-y)\big)\,K(x+y)\,dy + \int_{0}^\infty \big(\varphi(x)-\varphi(y)\big)\,K(x-y)\,dy \\
		&= \int_{0}^\infty \big(\varphi(x)+\varphi(y)\big)\,K(x+y)\,dy + \int_{0}^\infty \big(\varphi(x)-\varphi(y)\big)\,K(x-y)\,dy \\
		&= \int_0^\infty \big(\varphi(x)-\varphi(y)\big) \big(K(x-y)-K(x+y)\big)\,dy +  \left(2\, \int_x^\infty K(z) \, dz\right) \varphi(x).
	\end{align*}	
\end{proof}

Next, we establish an analogous maximum principle to Proposition~\ref{Lema principio maximo} in the case of odd functions. In this scenario, conditions are only imposed in the half-line since the odd symmetry transfers the information to the whole space.

\begin{proposition}{} \label{Lema principio maximo impar}
	Let $L$ be an integro-differential operator of the form \eqref{Tipo operador} with nonincreasing kernel $K$ satisfying conditions \eqref{eq: K1} and \eqref{eq: K2} for some $s \in [1/2,1)$ and $0<\lambda\leq \Lambda$. Assume the potential function $c=c(x)$ is even and satisfies \eqref{eq: V1} and 
	\begin{equation} \label{Eq: SmallDomain}
		||c||_{L^\infty(\R)} < \frac{\lambda}{s\,r_0^{2s}},
	\end{equation}
	for some positive constants $R_0>r_0>0$.
	
	For $\alpha> 2s-1$, let $\varphi$ be a bounded and $C^1$ odd function in $\R$ such that
	$[\varphi']_{C^{\alpha}(\R)} < +\infty$, 
	$$L\varphi-c\varphi \geq 0 \,\,\,\,\text{in } \,(0,r_0)\cup (R_0,+\infty),$$
	and
	$$\varphi \geq 0 \,\,\,\,\text{in } \,[r_0,R_0].$$
	
	Then,
	$$ \varphi \geq 0 \,\,\,\,\text{in } \,[0,+\infty). $$
\end{proposition}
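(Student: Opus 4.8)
The plan is to argue by contradiction, following the scheme of Proposition~\ref{Lema principio maximo} but working with the alternative expression for $L$ on odd functions provided by Lemma~\ref{Lemma: AlternativeExpressionOdd}. Writing $V(x) := 2\int_x^\infty K(z)\,dz \geq 0$ and denoting by $\tilde L \varphi(x) := \int_0^\infty (\varphi(x)-\varphi(y))(K(x-y)-K(x+y))\,dy$ the regional part, Lemma~\ref{Lemma: AlternativeExpressionOdd} turns the hypothesis into
$$\tilde L\varphi + (V-c)\varphi \geq 0 \quad \text{in } (0,r_0)\cup(R_0,+\infty).$$
The monotonicity of $K$ gives $K(x-y)-K(x+y)\geq 0$ for $x,y>0$, so $\tilde L$ has a nonnegative kernel. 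Suppose, for contradiction, that $m := \inf_{[0,\infty)}\varphi < 0$. Since $\varphi(0)=0$ and $\varphi\geq 0$ on $[r_0,R_0]$, any point where $\varphi$ is negative lies in $(0,r_0)\cup(R_0,\infty)$.

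First I would treat the case in which $m$ is attained at some $x_0$; by the above, $x_0\in(0,r_0)$ or $x_0\in(R_0,\infty)$. Because $x_0$ is a global minimum on $[0,\infty)$ and the kernel of $\tilde L$ is nonnegative, the integrand of $\tilde L\varphi(x_0)$ is pointwise nonpositive, whence $\tilde L\varphi(x_0)\leq 0$. It then suffices to show that the zeroth-order coefficient $V(x_0)-c(x_0)$ is strictly positive, for then $(V(x_0)-c(x_0))\varphi(x_0)<0$ and the displayed inequality is violated. If $x_0\in(R_0,\infty)$ this is immediate from \eqref{eq: V1}, since $V\geq 0$ and $-c\geq c_0$. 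If $x_0\in(0,r_0)$ this is exactly where the small-domain hypothesis \eqref{Eq: SmallDomain} enters: using the lower bound in \eqref{eq: K2} one computes
$$V(x_0)=2\int_{x_0}^\infty K(z)\,dz \geq \frac{\lambda}{s}\,x_0^{-2s} > \frac{\lambda}{s\,r_0^{2s}} > ||c||_{L^\infty(\R)} \geq c(x_0),$$
so $V(x_0)-c(x_0)>0$ and we reach a contradiction.

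It remains to handle the case in which $m$ is not attained. Since $\varphi$ is continuous, its infimum is attained on every compact subset of $[0,\infty)$, so a minimizing sequence must escape to infinity; thus I may pick $x_k\to+\infty$ with $x_k>R_0$ and $\varphi(x_k)-\varphi(y)\leq\varphi(x_k)-m\leq 1/k$ for all $y\geq 0$. Evaluating the inequality at $x_k$ and using $V\geq 0$, $-c(x_k)\geq c_0$, I get $\tilde L\varphi(x_k)\geq (c(x_k)-V(x_k))\varphi(x_k)\geq -c_0\,\varphi(x_k)$. The core estimate is then an upper bound for $\tilde L\varphi(x_k)$: splitting the regional integral at $|y-x_k|=\delta$, I would bound the tail $\{|y-x_k|\geq\delta\}$ using $\varphi(x_k)-\varphi(y)\leq 1/k$ together with $\int_{|z|\geq\delta}K(z)\,dz\leq C\delta^{-2s}$ from \eqref{eq: K2}, and the singular part $\{|y-x_k|<\delta\}$ by symmetrizing in $y=x_k\pm t$ and invoking the $C^{1,\alpha}$ bound $|2\varphi(x_k)-\varphi(x_k+t)-\varphi(x_k-t)|\leq C\,t^{1+\alpha}$, which is integrable against $K$ precisely because $\alpha>2s-1$; the $K(x_k+y)$ contributions are harmless since $x_k+y\geq x_k\to\infty$. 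This yields
$$-c_0\,\varphi(x_k)\leq \tilde L\varphi(x_k)\leq C\big(\delta^{\alpha+1-2s}+k^{-1}\delta^{-2s}+x_k^{-2s}\big),$$
and choosing $\delta=k^{-1/2}$ sends the right-hand side to $0$ (here $\alpha>2s-1$ and $s<1$ are used), so that $-c_0 m\leq 0$, contradicting $m<0$.

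The main obstacle is the behavior near the origin. The coefficient $V(x)=2\int_x^\infty K$ blows up like $x^{-2s}$ as $x\to 0^+$, and the whole point of condition \eqref{Eq: SmallDomain} is to guarantee that this positive zeroth-order term dominates the potential $c$ on the small interval $(0,r_0)$, which is what lets the maximum-principle argument close there; without it a negative minimum near $0$ could not be excluded. By contrast, the estimates at infinity are essentially those of Proposition~\ref{Lema principio maximo} transplanted to the regional operator $\tilde L$.
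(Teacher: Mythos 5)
Your proposal is correct and follows essentially the same route as the paper: you rewrite $L$ via Lemma~\ref{Lemma: AlternativeExpressionOdd} as a regional operator with nonnegative kernel plus the zeroth-order term $V(x)=2\int_x^\infty K$, show the effective potential $c-V$ is strictly negative on $(0,r_0)\cup(R_0,+\infty)$ --- using \eqref{eq: V1} at infinity and \eqref{eq: K2} together with \eqref{Eq: SmallDomain} near the origin --- and then run the contradiction/minimizing-sequence argument of Proposition~\ref{Lema principio maximo}. The only difference is expository: the paper states this last step as ``repeat the proof of Proposition~\ref{Lema principio maximo}'' after verifying $\tilde c(x)=c(x)-V(x)\leq-\tilde c_0<0$, whereas you carry out the attained/non-attained cases explicitly, with all estimates matching.
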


Note that \eqref{Eq: SmallDomain} is a small domain condition, which is satisfied when $r_0$ is small enough depending on the integro-differential operator and the potential function. When applying this result in Section~\ref{sec:ProofMainResultsOdd}, such a condition will not impose any restriction since we will have enough freedom to choose $r_0>0$ as small as needed.

\begin{proof}[Proof of Proposition~\ref{Lema principio maximo impar}]
	We begin by noticing that using the previous lemma we can rewrite $L\varphi-c\varphi \geq 0$  as
	$$ \int_0^\infty \big(\varphi(x)-\varphi(y)\big) \big(K(x-y)-K(x+y)\big)\,dy - \left(c(x)-2\int_x^\infty K(z) \, dz\right)  \varphi(x) \geq 0. $$
	
	Thus, it is clear that we can repeat the proof of Proposition~\ref{Lema principio maximo} if we show that
	$$ \tilde{c}(x) := c(x)-2\int_x^\infty K(z) \, dz, $$
	satisfies
	$$\tilde{c}(x) \leq -\tilde{c}_0 < 0 \ \ \text{ in } \ \ (0,r_0)\cup (R_0,+\infty)$$
	for some positive constant $\tilde{c}_0$.
	
	On the one hand, by combining the positivity of the kernel $K$ and condition \eqref{eq: V1}, we deduce that given any $x\in(R_0,+\infty)$, 
	$$ \tilde{c}(x) \leq -c_0 < 0.$$
	On the other hand, by using the ellipticity assumption \eqref{eq: K2}, we obtain that given any $x\in(0,r_0)$,
	$$ \tilde{c}(x) \leq ||c||_{L^\infty(\R)} - 2\lambda \int_x^\infty z^{-1-2s}\,dz = ||c||_{L^\infty(\R)} - \frac{\lambda}{s}x^{-2s}\leq ||c||_{L^\infty(\R)} - \frac{\lambda}{s}r_0^{-2s}<0. $$
	
	Hence, it is enough to take $\tilde{c}_0 = \min\left\{c_0,\frac{\lambda}{s}r_0^{-2s}-||c||_{L^\infty(\R)}\right\}>0$.
\end{proof}

Let us remark that a maximum principle as in Proposition~\ref{Lema principio maximo impar} cannot hold if we remove the odd symmetry of the function. In that case, having a negative minimum in $(0,+\infty)$ does not give any information about the sign of the operator at this point since the behavior of the function in $(-\infty,0)$ is unknown.


\section{Integrability bounds for the kernel} \label{sec:Integrability}
This section is devoted to presenting some integrability bounds that will be needed to establish Theorems \ref{Teorema L0} and \ref{Mi teorema general impar}. In fact, the validity of these bounds is what prevents us from extending our results to $s\in (0,1/2)$.

In \cite{HamelRosOtonSireValdinoci}, Hamel, Ros-Oton, Sire, and Valdinoci work with compactly supported kernels in dimension $2$. Once such a condition is assumed, the integrability bounds for the kernel follow immediately for free. In our case, when removing that assumption, some estimates become much more delicate. In order to control the integrals we define some auxiliary sets and prove certain relations between them that simplify the computations.

First, we show the following identity:

\begin{lemma} \label{Lema conjuntos 1}
	Let $S_R$, $D_R$, $\mathcal{T}_R^x$, and $\mathcal{T}_R^y$  be the sets
	$$ S_R =\left(B_{2R}\times B_R^c\right) \cup \left(B_R^c\times B_{2R}\right) \subset \R^n\times \R^n, $$
	$$ D_R = \left\{(x,y)\in\R^n\times\R^n : |x-y|\leq 4R\right\}\subset \R^n\times \R^n, $$
	$$ \mathcal{T}_R^x = \left\{(x,y)\in\R^n\times\R^n\, \text{ s.t. } \,|x|<2R \,\text{ and }\, |x-y|\geq 4R\right\}\subset \R^n\times \R^n, $$
	and
	$$ \mathcal{T}_R^y = \left\{(x,y)\in\R^n\times\R^n\, \text{ s.t. } \,|y|<2R \,\text{ and }\, |x-y|\geq 4R\right\}\subset \R^n\times \R^n. $$
	
	Then, $\mathcal{T}_R^x$ and $\mathcal{T}_R^y$ are disjoint and satisfy
	$$ S_R \setminus D_R = \mathcal{T}_R^x \cup \mathcal{T}_R^y. $$
\end{lemma}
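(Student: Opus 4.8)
The claim is purely set-theoretic, so the plan is to verify it directly from the triangle inequality in $\R^n$, treating the disjointness and the two inclusions separately.

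First I would dispose of the disjointness. If a point $(x,y)$ belonged to both $\mathcal{T}_R^x$ and $\mathcal{T}_R^y$, then $|x|<2R$ and $|y|<2R$, so $|x-y|\leq |x|+|y|<4R$, contradicting the common defining condition $|x-y|\geq 4R$. Hence $\mathcal{T}_R^x\cap\mathcal{T}_R^y=\emptyset$.

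For the identity I would first rewrite $S_R\setminus D_R=S_R\cap\{(x,y):|x-y|>4R\}$, since $D_R$ is exactly the region $|x-y|\leq 4R$. For the inclusion $S_R\setminus D_R\subseteq \mathcal{T}_R^x\cup\mathcal{T}_R^y$, take $(x,y)\in S_R$ with $|x-y|>4R$; by definition of $S_R$ the point lies in $B_{2R}\times B_R^c$ or in $B_R^c\times B_{2R}$, and in the first case $|x|<2R$ forces $(x,y)\in\mathcal{T}_R^x$, while in the second $|y|<2R$ forces $(x,y)\in\mathcal{T}_R^y$. For the reverse inclusion I would start from $(x,y)\in\mathcal{T}_R^x$, so $|x|<2R$ and $|x-y|\geq 4R$; the reverse triangle inequality then gives $|y|\geq|x-y|-|x|>4R-2R=2R$, whence $y\in B_{2R}^c\subseteq B_R^c$ and thus $(x,y)\in B_{2R}\times B_R^c\subseteq S_R$. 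The same computation with the roles of $x$ and $y$ swapped handles $\mathcal{T}_R^y$.

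The only delicate point—and the one I would flag explicitly—is the comparison of the strict and non-strict inequalities on the boundary $\{|x-y|=4R\}$: the sets $\mathcal{T}_R^x,\mathcal{T}_R^y$ are defined with $|x-y|\geq 4R$, whereas $S_R\setminus D_R$ corresponds to the strict condition $|x-y|>4R$. The two sides therefore agree exactly on $\{|x-y|\neq 4R\}$ and differ at most on the codimension-one set $\{|x-y|=4R\}$, which has Lebesgue measure zero in $\R^n\times\R^n$. Since the lemma is only ever invoked inside integrals of the kernel in Section~\ref{sec:Integrability}, this null set is irrelevant, and with matching conventions for the boundary inequality the identity is exact. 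I do not expect any genuine obstacle beyond this careful bookkeeping of the triangle inequalities.
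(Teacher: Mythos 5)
Your proof is correct and follows essentially the same route as the paper's own argument: a direct case analysis using the triangle inequality for both inclusions and for disjointness. Your observation about the mismatch between the strict inequality $|x-y|>4R$ in $S_R\setminus D_R$ and the non-strict $|x-y|\geq 4R$ in $\mathcal{T}_R^x,\mathcal{T}_R^y$ identifies a genuine (measure-zero, hence harmless for the integral estimates) imprecision that the paper's proof silently glosses over, and your handling of it is appropriate.
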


\begin{proof}
	On the one hand, let $(x,y)\in S_R\setminus D_R$. By the symmetry of the set with respect to $x$ and $y$ we can assume without loss of generality that $(x,y)\in\left(B_{2R}\times B_R^c\right)\cap\{|x-y|>4R\}$. Then, $(x,y)\in \mathcal{T}_R^x$ follows trivially. 
	
	On the other hand, given $(x,y)\in \mathcal{T}_R^x$, we can apply the triangle inequality to deduce that $|y|\geq 2R$. Therefore, we conclude that $(x,y)\in S_R\setminus D_R$.
	
	Finally, in order to prove that the sets $\mathcal{T}_R^x$ and $\mathcal{T}_R^y$ are disjoint we only need to recall that given $(x,y)\in \mathcal{T}_R^x$, it satisfies $|y|\geq 2R$, and therefore $(x,y)\not\in \mathcal{T}_R^y$.
\end{proof}

Next, we prove a useful inclusion of sets.

\begin{lemma} \label{Lema conjuntos 2}
	Let $S_R$ and $D_R$ be as in Lemma \ref{Lema conjuntos 1}, and let $\mathcal{R}_R^x$ and $\mathcal{R}_R^y$ be the sets
	$$ \mathcal{R}_R^x = \left\{(x,y)\in\R^n\times\R^n\, \text{ s.t. } \,|x|<R \,\text{ and }\, |x-y|\leq 2R\right\}\subset \R^n\times \R^n, $$
	and
	$$ \mathcal{R}_R^y = \left\{(x,y)\in\R^n\times\R^n\, \text{ s.t. } \,|y|<R \,\text{ and }\, |x-y|\leq 2R\right\}\subset \R^n\times \R^n. $$
	
	Then,
	$$ \mathcal{R}_{2R}^x \setminus \mathcal{R}_{R}^x \subseteq S_R \cap D_R \subseteq \left(\mathcal{R}_{2R}^x \setminus \mathcal{R}_{2R/3}^x\right) \cup \left(\mathcal{R}_{2R}^y \setminus \mathcal{R}_{2R/3}^y\right)  $$
\end{lemma}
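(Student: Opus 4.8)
The plan is to prove the two inclusions separately, exploiting throughout the symmetry of the entire statement under swapping the roles of $x$ and $y$: the sets $S_R$ and $D_R$ are invariant under this swap (indeed $S_R$ is the union of a product with its transpose, and $D_R$ depends only on $|x-y|$), while the swap interchanges $\mathcal{R}_\rho^x$ with $\mathcal{R}_\rho^y$, so it also leaves the right-hand set of the chain invariant. A preliminary observation I would record is that $D_R$ is exactly the condition $|x-y|\leq 4R$, which is already built into the definition of every $\mathcal{R}_{2R}^\bullet$; hence membership in $D_R$ will be automatic in all the computations, and only the ``ball'' conditions on $|x|$ and $|y|$ need to be tracked.

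For the first inclusion $\mathcal{R}_{2R}^x \setminus \mathcal{R}_R^x \subseteq S_R \cap D_R$, I would take a point $(x,y)$ with $|x|<2R$, $|x-y|\leq 4R$, and failing the defining condition of $\mathcal{R}_R^x$, i.e. $|x|\geq R$ or $|x-y|>2R$. Membership $(x,y)\in D_R$ is immediate from $|x-y|\leq 4R$. To place $(x,y)$ in $S_R$ I would first establish the dichotomy $|x|\geq R$ or $|y|\geq R$: if instead $|x|<R$ and $|y|<R$, then $|x-y|\leq|x|+|y|<2R$ together with $|x|<R$ would contradict the failure of the $\mathcal{R}_R^x$ condition. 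Once this dichotomy is in hand I distinguish two cases according to $|y|$: if $|y|\geq R$ then $(x,y)\in B_{2R}\times B_R^c$ (using $|x|<2R$); if $|y|<R$ then the dichotomy forces $|x|\geq R$, and since $|y|<R<2R$ we get $(x,y)\in B_R^c\times B_{2R}$. Either way $(x,y)\in S_R$.

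For the second inclusion I would, by the symmetry noted above, assume without loss of generality that $(x,y)\in(B_{2R}\times B_R^c)\cap D_R$, that is $|x|<2R$, $|y|\geq R$, and $|x-y|\leq 4R$. Then $(x,y)\in\mathcal{R}_{2R}^x$ is clear. The key dichotomy is whether or not $(x,y)\in\mathcal{R}_{2R/3}^x$. If not, then $(x,y)\in\mathcal{R}_{2R}^x\setminus\mathcal{R}_{2R/3}^x$ and we are done. If it is, then $|x|<2R/3$ and $|x-y|\leq 4R/3$, so by the triangle inequality $|y|\leq|x|+|x-y|<2R/3+4R/3=2R$, giving $(x,y)\in\mathcal{R}_{2R}^y$; and since $|y|\geq R>2R/3$ the point cannot satisfy $|y|<2R/3$, hence $(x,y)\notin\mathcal{R}_{2R/3}^y$, so $(x,y)\in\mathcal{R}_{2R}^y\setminus\mathcal{R}_{2R/3}^y$.

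I expect the only genuinely delicate point to be the bookkeeping of constants in this last step: the thresholds $2R/3$ and $4R/3$ are chosen precisely so that $2R/3+4R/3=2R$, which is exactly what makes the triangle inequality close and deposits the point in $\mathcal{R}_{2R}^y$. Everything else is an elementary case analysis resting on the triangle inequality, and the main risk is selecting the wrong one of the two factors of $S_R$ (or the wrong cross-shaped set) in a given case; the symmetry reduction and the explicit dichotomies above are designed to keep this organized and to avoid such missteps.
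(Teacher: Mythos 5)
Your proof is correct and follows essentially the same route as the paper's: an elementary case analysis via the triangle inequality, with the $x\leftrightarrow y$ symmetry reduction for the second inclusion and the same key computation $2R/3+4R/3=2R$. The only cosmetic difference is in the first inclusion, where you derive the dichotomy $|x|\geq R$ or $|y|\geq R$ by contradiction and then split on $|y|$, while the paper splits directly on $|x|$; both are the same argument in substance.
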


\begin{proof}
	The proof of these inclusions is simple. As in Lemma~\ref{Lema conjuntos 1}, we only need to consider different cases and use the triangle inequality to relate $|x|$, $|y|$, and $|x-y|$ .
	
	For the first inclusion, let $(x,y)\in \mathcal{R}_{2R}^x \setminus \mathcal{R}_{R}^x$. We distinguish two cases: either $|x|\leq R$ and $2R\leq |x-y|\leq 4R$, or $R\leq |x|\leq 2R$ and $|x-y|\leq 4R$. In the first scenario, it is clear by using the triangle inequality that $|y|\geq R$, and therefore $(x,y) \in (B_{2R}\times B_R^c)\cap D_R \subset S_R\cap D_R$. In the second one, we only need to note that $(B_{2R}\setminus B_R)\times \R^n \subset (B_{2R}\times B_R^c) \cup (B_{R}^c\times B_{2R})$.
	
	For the second inclusion, by taking advantage of the symmetry with respect to $x$ and $y$ of the sets $S_R$ and $D_R$ it is enough to prove that $\left(B_{2R}\times B_R^c\right) \cap D_R \subset (\mathcal{R}_{2R}^x \setminus \mathcal{R}_{2R/3}^x) \cup (\mathcal{R}_{2R}^y \setminus \mathcal{R}_{2R/3}^y)$. Then, given $(x,y) \in \left(B_{2R}\times B_R^c\right) \cap D_R$, if $4/3R\leq|x-y|\leq 4R$ or $2R/3\leq |x|\leq 2R$, it is clear that $(x,y) \in \mathcal{R}_{2R}^x \setminus \mathcal{R}_{2R/3}^x $. Therefore, we are left with proving the desired result for the case $|x|\leq 2R/3$, $|y|\geq R$, and $|x-y|\leq 4/3R$. By applying the triangle inequality we can deduce that in such a case $|y|\leq 2R$ and we conclude that $(x,y)\in \mathcal{R}_{2R}^y \setminus \mathcal{R}_{2R/3}^y$.		
\end{proof}

Once we have established the previous relations of sets, we can proceed by proving the integral estimates. We first state them for the kernel of the fractional Laplacian. The case of general integro-differential operators will follow from them as a consequence of the ellipticity assumptions.

\begin{lemma}{} \label{Lema integrabilidad}
	Let $S_R$ and $D_R$ be as in Lemma~\ref{Lema conjuntos 1} and Lemma~\ref{Lema conjuntos 2}. Assume $s\in(0,1)$ and $0\leq \gamma\leq \min(s,1/2)$. 
	
	Then,
	$$ \int_{S_R\cap D_R} \frac{|x|^{2\gamma}}{|x-y|^{n+2s-2}}\,dx dy \leq C \,R^{2\gamma+n+2-2s}, $$
	and
	$$ \int_{S_R \setminus D_R}\frac{|x|^{2\gamma}}{|x-y|^{n+2s}}\,dx dy \leq C \,R^{2\gamma+n-2s}, $$
	where $C$ is a positive constant depending only on $n$, $s$, and $\gamma$.
\end{lemma}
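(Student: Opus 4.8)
The plan is to prove the two bounds separately, in each case first replacing the awkward domain of integration by the concrete sets furnished by Lemmas~\ref{Lema conjuntos 1} and~\ref{Lema conjuntos 2}, and then collapsing everything to one-dimensional radial integrals. The organizing idea is to isolate the regions where \emph{both} $|x|$ and $|x-y|$ are comparable to $R$ (so that the weight $|x|^{2\gamma}$ is harmless) from the single genuinely delicate region in which $|x|$ is unbounded.

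For the first bound I would start from the second inclusion in Lemma~\ref{Lema conjuntos 2}, namely $S_R\cap D_R\subseteq(\mathcal{R}_{2R}^x\setminus\mathcal{R}_{2R/3}^x)\cup(\mathcal{R}_{2R}^y\setminus\mathcal{R}_{2R/3}^y)$, and bound the integral of the positive integrand by the sum of its integrals over $\mathcal{R}_{2R}^x$ and $\mathcal{R}_{2R}^y$. On $\mathcal{R}_{2R}^x$ one has $|x|<2R$, while on $\mathcal{R}_{2R}^y$ the triangle inequality gives $|x|\le|y|+|x-y|\le 6R$; in both cases $|x|^{2\gamma}\le CR^{2\gamma}$. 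Since $|x-y|\le 4R$ on both pieces, the change of variables $z=x-y$ reduces the estimate to
$$ CR^{2\gamma}\int_{\{|x|<2R\}}\left(\int_{\{|z|\le 4R\}}\frac{dz}{|z|^{n+2s-2}}\right)dx. $$
The inner integral converges precisely because $n+2s-2<n$, i.e.\ $s<1$, and equals a constant times $R^{2-2s}$, while the outer one contributes a volume factor $R^{n}$; the product is the desired $CR^{2\gamma+n+2-2s}$.

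For the second bound I would use $S_R\setminus D_R=\mathcal{T}_R^x\cup\mathcal{T}_R^y$ from Lemma~\ref{Lema conjuntos 1} and treat the two sets differently. On $\mathcal{T}_R^x$ we have $|x|<2R$, so $|x|^{2\gamma}\le CR^{2\gamma}$, and the tail integral $\int_{\{|z|\ge 4R\}}|z|^{-(n+2s)}\,dz\sim R^{-2s}$ (convergent since $s>0$) times the volume $R^{n}$ of $\{|x|<2R\}$ gives $CR^{2\gamma+n-2s}$. The set $\mathcal{T}_R^y$ is the heart of the matter: there $|y|<2R$ is bounded but $|x|$ may be arbitrarily large, so $|x|^{2\gamma}$ cannot simply be absorbed into a power of $R$. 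The key point is that on $\mathcal{T}_R^y$ one has $|x-y|\ge 4R\ge 2|y|$, whence $|x|\le|y|+|x-y|\le\tfrac{3}{2}|x-y|$ and consequently $|x|^{2\gamma}\,|x-y|^{-(n+2s)}\le C|x-y|^{2\gamma-n-2s}$. Integrating first in $x$ (that is, in $z=x-y$ over $\{|z|\ge 4R\}$) yields $\int_{\{|z|\ge 4R\}}|z|^{2\gamma-n-2s}\,dz\sim R^{2\gamma-2s}$, which converges when $\gamma<s$ (the endpoint $\gamma=s$, possible only if $s\le 1/2$, is a borderline logarithmic case that does not arise in the applications); the remaining integration over $\{|y|<2R\}$ supplies the factor $R^{n}$, once more producing $CR^{2\gamma+n-2s}$.

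The main obstacle is exactly this last region $\mathcal{T}_R^y$, the only place where the numerator $|x|^{2\gamma}$ is not tamed by the scale $R$. Its treatment, through the comparison $|x|\le\tfrac{3}{2}|x-y|$, forces the far-field convergence condition $\gamma<s$, which is guaranteed within the admissible range $0\le\gamma\le\min(s,1/2)$ (note that only $\gamma\ge 0$ and $\gamma<s$ are actually used in the estimates, the near-diagonal part requiring in addition $s<1$). This is also where the restriction to $s\ge 1/2$ in the main theorems originates: in the applications the relevant exponent is $\gamma=s-\tfrac{1}{2}$, which is nonnegative precisely when $s\ge 1/2$, so for $s<1/2$ the weight $|x|^{2\gamma}$ would force a negative $\gamma$ and the above far-field estimate would no longer be available.
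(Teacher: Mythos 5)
Your proof is correct and, at the structural level, is the same as the paper's: the first bound via the inclusion of Lemma~\ref{Lema conjuntos 2} followed by freezing $|x|^{2\gamma}\le CR^{2\gamma}$ and a radial computation, and the second bound via the decomposition $S_R\setminus D_R=\mathcal{T}_R^x\cup\mathcal{T}_R^y$ of Lemma~\ref{Lema conjuntos 1}, with $\mathcal{T}_R^x$ handled identically. The one genuine difference is your treatment of the delicate region $\mathcal{T}_R^y$: the paper changes variables to $w=y$, $z=x-y$ and splits $|w+z|^{2\gamma}\le|w|^{2\gamma}+|z|^{2\gamma}$, which is precisely where its hypothesis $\gamma\le 1/2$ (subadditivity of $t\mapsto t^{2\gamma}$) enters; you instead use the geometric comparison $|x|\le\tfrac32|x-y|$, valid on $\mathcal{T}_R^y$ since $|y|<2R\le\tfrac12|x-y|$, so your argument uses only $\gamma\ge 0$ and the far-field convergence condition $\gamma<s$, dispensing with $\gamma\le 1/2$ entirely (the paper could do the same at the cost of a constant $2^{2\gamma-1}$, but as written that restriction is tied to this step). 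You are also right to single out the endpoint: when $\gamma=s$, which the stated hypothesis $\gamma\le\min(s,1/2)$ permits only if $s\le 1/2$, the integral over $\mathcal{T}_R^y$ genuinely diverges, since $\int_{4R}^\infty r^{2\gamma-2s-1}\,dr=+\infty$ (it is an honest divergence, not a finite logarithmic correction, so your phrase ``borderline logarithmic case'' undersells it). The paper's proof silently assumes $\gamma<s$ in the same place, and its closing remark that $\gamma\le\min(s,1/2)$ ``ensures integrability'' is slightly off at this endpoint; the discrepancy is harmless for the paper's applications, where $n=1$ and $\gamma\le s-\tfrac12<s$, exactly as you observe.
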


We point out that analogous bounds from below can also be deduced. However, since we will not use such estimates in the present work, we skip them.

\begin{proof}[Proof of Lemma~\ref{Lema integrabilidad}]
	To obtain the first estimate we use the inclusion of sets given by Lemma~\ref{Lema conjuntos 2}. That is,
	\begin{align*}
		\int_{S_R\cap D_R} \frac{|x|^{2\gamma}}{|x-y|^{n+2s-2}}&\,dx dy \leq C R^{2\gamma} \int_{S_R\cap D_R} |x-y|^{2-n-2s}\,dx dy \\
		&\hspace{-10mm}\leq C R^{2\gamma}  \left( \int_{ \mathcal{R}_{2R}^x \setminus \mathcal{R}_{2R/3}^x} |x-y|^{2-n-2s}\,dx dy + \int_{ \mathcal{R}_{2R}^y \setminus \mathcal{R}_{2R/3}^y}  |x-y|^{2-n-2s}\,dx dy\right) \\
		&\hspace{-10mm}\leq  C R^{2\gamma}\int_{ \mathcal{R}_{2R}^x \setminus \mathcal{R}_{2R/3}^x} |x-y|^{2-n-2s}\,dx dy \\
		&\hspace{-10mm} =   C R^{2\gamma} \left(\int_{ \mathcal{R}_{2R}^x } |x-y|^{2-n-2s}\,dx dy - \int_{ \mathcal{R}_{2R/3}^x} |x-y|^{2-n-2s}\,dx dy\right) \\
		&\hspace{-10mm}= C R^{2\gamma} \left( (2R)^{n-2s+2}-(2R/3)^{n-2s+2}\right) \\
		&\hspace{-10mm}= C \,R^{2\gamma+n+2-2s}.
	\end{align*}
	
	The second bound is more delicate. First we find that
	\begin{align*}
		\int_{\mathcal{T}_{2R}^x}\frac{|x|^{2\gamma}}{|x-y|^{n+2s}}\,dx dy &\leq C R^{2\gamma} \int_{\mathcal{T}_{2R}^x } |x-y|^{-n-2s}\,dx dy, \\
		&= C R^{2\gamma} \int_{B_{2R}} dw \int_{B_{4R}^c} |z|^{-n-2s} dz \\
		&= C\,C R^{2\gamma+n} \int_{4R}^\infty r^{-n-2s}r^{n-1} \,dr \\
		&= C\,R^{2\gamma+n-2s},
	\end{align*}
	where we have performed the change of variables: $z=x-y$ and $w=x$. Next, we obtain
	\begin{align*}
		\int_{\mathcal{T}_{2R}^y}\frac{|x|^{2\gamma}}{|x-y|^{n+2s}}\,dx dy &=  \int_{B_{2R}} dw \int_{B_{4R}^c} dz \frac{|w+z|^{2\gamma}}{|z|^{n+2s}} \\
		&\leq \int_{B_{2R}} dw \int_{B_{4R}^c} dz \frac{|w|^{2\gamma}+|z|^{2\gamma}}{|z|^{n+2s}} \\
		&\leq  C\, R^{n} \left( R^{2\gamma}\int_{4R}^\infty r^{-n-2s}r^{n-1} \,dr + \int_{4R}^\infty r^{-n-2s+2\gamma}r^{n-1} \,dr\right)\\
		&= C\,R^{2\gamma+n-2s}.
	\end{align*}

	Finally, we conclude the proof by applying Lemma~\ref{Lema conjuntos 1}. Let us point out that it is crucial in the last estimate to assume $\gamma\leq \min(s,1/2)$ in order to ensure the integrability.
\end{proof}

Once we have established the previous bounds for the kernel of the fractional Laplacian, we can easily obtain the estimates we need, in cross-shaped domains, for the bigger class of operators satisfying condition \eqref{eq: K3}.

\begin{corollary}{} \label{Corolario integrabilidad}
	Let $L$ be an integral operator of the form \eqref{Tipo operador}, with kernel $K$ satisfying conditions \eqref{eq: K1} and \eqref{eq: K3} for some $0<\underline{s}\leq \overline{s}<1$. Assume the set $S_R$ is defined as in Lemma~\ref{Lema conjuntos 1} and $0\leq \gamma\leq \min(\underline{s},1/2)$. 
	
	Then
	$$\int_{S_R} \min\left\{1, \frac{|x-y|}{R}  \right\}^2 |x|^{2\gamma}\, K(x-y) \,dx dy \leq C\,R^{n+2\gamma-2\underline{s}}, $$
	for a positive constant $C$ not depending on $R$. 
	
	In particular, if $n=1$, $1/2\leq\underline{s}\leq \overline{s}<1$, and $\gamma\in [0,\underline{s}-1/2]$, there is a positive constant $C$, independent of $R$, such that
	$$\int_{S_R} \min\left\{1, \frac{|x-y|}{R}  \right\}^2 |x|^{2\gamma}\,K(x-y) \,dx dy \leq C, $$
	for any $R\geq 1$.
\end{corollary}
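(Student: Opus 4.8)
The plan is to split the domain $S_R$ into the near-diagonal part $S_R\cap D_R$ and the far part $S_R\setminus D_R$, to treat the cut-off factor $\min\{1,|x-y|/R\}^2$ differently on each, to invoke assumption \eqref{eq: K3} so as to reduce everything to the two estimates in Lemma~\ref{Lema integrabilidad}, and finally to collect the resulting powers of $R$.

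First I would treat the near-diagonal region $S_R\cap D_R$. Using the elementary inequality $\min\{1,t\}\leq t$ one has $\min\{1,|x-y|/R\}^2\leq |x-y|^2/R^2$, so extracting the factor $R^{-2}$ and multiplying the kernel by $|x-y|^2$ lowers its order by two. Applying \eqref{eq: K3} gives $|x-y|^2 K(x-y)\leq \Lambda_1|x-y|^{2-n-2\underline{s}}+\Lambda_2|x-y|^{2-n-2\overline{s}}$, and the first estimate of Lemma~\ref{Lema integrabilidad}, used once with $s=\underline{s}$ and once with $s=\overline{s}$, bounds the two resulting integrals by $CR^{2\gamma+n+2-2\underline{s}}$ and $CR^{2\gamma+n+2-2\overline{s}}$. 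The prefactor $R^{-2}$ then produces a contribution $\leq C(R^{2\gamma+n-2\underline{s}}+R^{2\gamma+n-2\overline{s}})$.

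On the far region $S_R\setminus D_R$ the point is that $|x-y|\geq 4R>R$, so the cut-off equals $1$ identically. Here I would apply \eqref{eq: K3} directly and invoke the second estimate of Lemma~\ref{Lema integrabilidad}, again with $s=\underline{s}$ and $s=\overline{s}$, bounding the integral by $C(R^{2\gamma+n-2\underline{s}}+R^{2\gamma+n-2\overline{s}})$. Adding the two regional contributions and using $\underline{s}\leq\overline{s}$ together with $R\geq 1$ (so that $R^{2\gamma+n-2\overline{s}}\leq R^{2\gamma+n-2\underline{s}}$) yields the claimed bound $CR^{n+2\gamma-2\underline{s}}$. Before these applications I must check that the hypotheses of Lemma~\ref{Lema integrabilidad} hold for \emph{both} exponents: since $\underline{s}\leq\overline{s}$ we have $\gamma\leq\min(\underline{s},1/2)\leq\min(\overline{s},1/2)$, so the lemma is available in each invocation. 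For the ``in particular'' statement, setting $n=1$ and $\gamma\leq\underline{s}-1/2$ makes the exponent $n+2\gamma-2\underline{s}=1+2\gamma-2\underline{s}\leq 0$ nonpositive, whence $R^{n+2\gamma-2\underline{s}}\leq 1$ for every $R\geq 1$ and the uniform bound follows.

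The argument is essentially mechanical once Lemma~\ref{Lema integrabilidad} is in hand; the only genuine subtlety — already flagged at the end of that lemma's proof — is the constraint $\gamma\leq\min(\underline{s},1/2)$, which is exactly what secures integrability of the weight $|x|^{2\gamma}$ against the far-field kernel, so that is the hypothesis I would verify most carefully. The remaining point of care is purely bookkeeping: keeping track of which power of $R$ dominates, namely the $\underline{s}$-term, which prevails precisely because we restrict to $R\geq 1$.
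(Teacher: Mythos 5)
Your proposal is correct and follows essentially the same route as the paper: the same splitting of $S_R$ into $S_R\cap D_R$ and $S_R\setminus D_R$, the same use of \eqref{eq: K3} to reduce to the two estimates of Lemma~\ref{Lema integrabilidad} applied with both exponents $\underline{s}$ and $\overline{s}$, and the same bookkeeping (with $R\geq 1$ making the $\underline{s}$-power dominate). If anything, your use of the inequality $\min\{1,|x-y|/R\}\leq |x-y|/R$ on $S_R\cap D_R$ is slightly more careful than the paper's proof, which asserts equality there even though $D_R$ allows $|x-y|$ up to $4R$; this imprecision in the paper is harmless since only the upper bound is used.
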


Note that the uniform bound can only be established when $n+2\gamma-2\underline{s}\leq 0$. Since the dimension $n$ is an integer, it means that the previous condition is not satisfied unless $n=1$, $1/2\leq\underline{s}\leq \overline{s}<1$, and $\gamma\in [0,\underline{s}-1/2]$. This is the reason why we need to assume such dimension and range of fractional powers, in addition to a growth condition of order $s-1/2$ in Theorem~\ref{Teorema L0}.

\begin{proof}[Proof of Corollary \ref{Corolario integrabilidad}]
	First, note that
	$$ \min\left\{1, \frac{|x-y|}{R}  \right\} = \begin{cases}
	\frac{|x-y|}{R} &\text{if $(x,y) \in D_R$,}\\
	1 &\quad\text{otherwise,}
	\end{cases}$$
	where $D_R$ is the set defined in Lemma \ref{Lema integrabilidad}.
	
	Then, by the linearity of the integral, the ellipticity assumption in the kernel \eqref{eq: K3}, and the relations of sets from Lemma \ref{Lema conjuntos 1} we get
	
	\begin{align*}
		&\int_{S_R} \min\left\{1, \frac{|x-y|}{R}  \right\}^2|x|^{2\gamma}\,K(x-y) \,dx dy \\
		&\hspace{12mm}= \int_{S_R \cap D_R} \frac{|x-y|^2}{R^2}|x|^{2\gamma}\,K(x-y) \,dx dy + \int_{S_R\setminus D_R} |x|^{2\gamma}\, K(x-y)\,dx dy \\
		&\hspace{12mm}\leq \Lambda_1 \left(  \int_{S_R \cap D_R} \frac{|x|^{2\gamma}}{R^2|x-y|^{n+2\underline{s}-2}} \,dx dy + \int_{S_R\setminus D_R} \frac{|x|^{2\gamma}}{|x-y|^{n+2\underline{s}}}\,dx dy \right) \\
		&\hspace{15mm} + \Lambda_2 \left(  \int_{S_R \cap D_R} \frac{|x|^{2\gamma}}{R^2|x-y|^{n+2\overline{s}-2}} \,dx dy + \int_{S_R\setminus D_R} \frac{|x|^{2\gamma}}{|x-y|^{n+2\overline{s}}}\,dx dy \right) \\
		&\hspace{12mm}\leq \Lambda_1\,C_{n,\underline{s}} \,R^{n+2\gamma-2\underline{s}} + \Lambda_2\,C_{n,\overline{s}} \,R^{n+2\gamma-2\overline{s}} \leq C\,R^{n+2\gamma-2\underline{s}}.
	\end{align*}  
\end{proof}

Finally, we establish an analogue result in the odd setting.

\begin{corollary}{} \label{Corolario integrabilidad impar}
	Let $L$ be an integral operator of the form \eqref{Tipo operador}, with kernel $K$ being radially decreasing and satisfying conditions \eqref{eq: K1} and \eqref{eq: K2}. Assume $n=1$, $0\leq \gamma\leq \min(s,1/2)$, and the set
	$$S_R^{++} = S_R\cap \left(\R^+\times \R^+\right)$$
	with $S_R$ as in the previous results.
	
	Then,
	$$\int_{S_R^{++}} \min\left\{1, \frac{|x-y|}{R}  \right\}^2|x|^{2\gamma}\,\big( K(x-y)-K(x+y) \big) \,dx dy \leq C\,R^{1+2\gamma-2s}, $$
	for a positive constant $C$ not depending on $R$. In particular, in the case $s\in[1/2,1)$ and $0\leq\gamma\leq s-1/2$
	$$\int_{S_R^{++}} \min\left\{1, \frac{|x-y|}{R}  \right\}^2|x|^{2\gamma}\,\big( K(x-y)-K(x+y) \big) \,dx dy \leq C, $$
	for any $R\geq 1$.
\end{corollary}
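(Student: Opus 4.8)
The plan is to reduce the statement to Corollary~\ref{Corolario integrabilidad}, applied with $n=1$ and $\underline{s}=\overline{s}=s$ (so that condition \eqref{eq: K2} plays the role of \eqref{eq: K3} with $\Lambda_1=\Lambda$ and $\Lambda_2=0$). The key observation is that in the odd setting the combination $K(x-y)-K(x+y)$ takes the role of $K(x-y)$, and this modified kernel is controlled from above by $K(x-y)$ alone. Indeed, on the first quadrant $\R^+\times\R^+$ one has $|x-y|\leq|x+y|$, so the monotonicity of $K$ gives $K(x-y)\geq K(x+y)$; combined with the positivity \eqref{eq: K1} this yields the two-sided bound
$$ 0\leq K(x-y)-K(x+y)\leq K(x-y) \qquad\text{for all } (x,y)\in\R^+\times\R^+. $$

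First I would use the left inequality to note that the integrand is nonnegative on $S_R^{++}$, so that enlarging the domain of integration only increases the integral, and I would use the right inequality to discard the subtracted term. Concretely,
\begin{align*}
\int_{S_R^{++}} \min\left\{1,\tfrac{|x-y|}{R}\right\}^2 |x|^{2\gamma}\big(K(x-y)-K(x+y)\big)\,dx\,dy
&\leq \int_{S_R^{++}} \min\left\{1,\tfrac{|x-y|}{R}\right\}^2 |x|^{2\gamma}\,K(x-y)\,dx\,dy \\
&\leq \int_{S_R} \min\left\{1,\tfrac{|x-y|}{R}\right\}^2 |x|^{2\gamma}\,K(x-y)\,dx\,dy,
\end{align*}
where in the last step I used $S_R^{++}\subseteq S_R$ together with the positivity of the full integrand (which follows from $K>0$). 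The right-hand side is exactly the quantity estimated in Corollary~\ref{Corolario integrabilidad}; invoking it with $n=1$, $\underline{s}=\overline{s}=s$, and the matching range $0\leq\gamma\leq\min(s,1/2)$ yields the bound $C\,R^{1+2\gamma-2s}$, as claimed. The \emph{in particular} statement is then immediate: when $s\in[1/2,1)$ and $0\leq\gamma\leq s-1/2$ the exponent satisfies $1+2\gamma-2s\leq 0$, so $R^{1+2\gamma-2s}\leq 1$ for every $R\geq 1$.

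I do not expect any genuine obstacle in this argument; the whole difficulty has already been absorbed into the earlier Lemma~\ref{Lema integrabilidad} and Corollary~\ref{Corolario integrabilidad}. The only points that require care are bookkeeping ones: the monotonicity hypothesis on $K$ is precisely what makes $K(x-y)-K(x+y)$ nonnegative (so that the left-hand side is a sensible nonnegative quantity and the domain enlargement is licit), and the threshold $0\leq\gamma\leq\min(s,1/2)$ is exactly the integrability range from Lemma~\ref{Lema integrabilidad}. It is worth emphasizing that the reduction succeeds only because an \emph{upper} bound is sought; a two-sided estimate would additionally require a lower bound on $K(x+y)$, which the monotonicity of $K$ alone does not supply.
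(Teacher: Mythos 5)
Your proof is correct, and it follows the same overall strategy as the paper --- reducing the odd-kernel bound to the unsigned estimate over $S_R$ from Corollary~\ref{Corolario integrabilidad} (equivalently, Lemma~\ref{Lema integrabilidad} plus the ellipticity upper bound) --- but the reduction step itself is carried out differently. The paper does not discard $K(x+y)$: it bounds the difference by the \emph{sum} $K(x-y)+K(x+y)$, observes that on $\R^+\times\R^+$ one has $|x-y|=\big||x|-|y|\big|$, and then uses the reflection symmetries of $S_R$ together with the evenness of $K$ to unfold the first-quadrant integral of the sum into exactly one half of the integral of $K(x-y)$ over all of $S_R$, finally replacing $\big||x|-|y|\big|$ by $|x-y|$ inside the cutoff before invoking Lemma~\ref{Lema integrabilidad}. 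You instead use $K(x+y)>0$ to get the pointwise bound $K(x-y)-K(x+y)\leq K(x-y)$ on the first quadrant and then enlarge the domain from $S_R^{++}$ to $S_R$ by nonnegativity of the integrand. Your route is shorter and loses only a harmless constant factor, where the paper's unfolding identity produces the factor $\tfrac12$ exactly; both are perfectly valid. One bookkeeping remark: contrary to what your closing paragraph suggests, the monotonicity of $K$ is not needed anywhere in your chain of inequalities (nor, in fact, in the paper's): the discarding step and the domain enlargement both rest only on the positivity in \eqref{eq: K1}, and the appeal to Corollary~\ref{Corolario integrabilidad} only on the upper bound in \eqref{eq: K2}. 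Monotonicity merely guarantees that the left-hand side is a nonnegative quantity --- the form in which the estimate is actually used in the odd setting --- so its role is to make the statement meaningful, not to make the upper bound true.
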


\begin{proof}
	By using Lemma \ref{Lema integrabilidad}, the ellipticity condition of the kernel and the
	symmetries of the domain $S_R$ with respect to $x$ and $y$ we get
	\begin{align*}
		\int_{S_R^{++}} \min\left\{1, \frac{|x-y|}{R}  \right\}^2&|x|^{2\gamma}\,\big( K(x-y)-K(x+y) \big)  \,dx dy \\
		&\hspace{-3mm} \leq \int_{S_R^{++}} \min\left\{1, \frac{|x-y|}{R}  \right\}^2|x|^{2\gamma}\,\big( K(x-y)+K(x+y) \big)  \,dx dy \\
		&\hspace{-3mm }= \int_{S_R^{++}} \min\left\{1, \frac{\big||x|-|y|\big|}{R}  \right\}^2|x|^{2\gamma}\,\left( K(x-y)+K(x+y) \right) \,dx dy \\
		&\hspace{-3mm} = \frac{1}{2} \int_{S_R} \min\left\{1, \frac{\big||x|-|y|\big|}{R}  \right\}^2|x|^{2\gamma}\,K(x-y) \,dx dy \\
		&\hspace{-3mm} \leq \frac{1}{2} \int_{S_R} \min\left\{1, \frac{|x-y|}{R}  \right\}^2|x|^{2\gamma}\,K(x-y) \,dx dy \\
		&\hspace{-3mm} \leq \frac{\Lambda}{2} \int_{S_R} \min\left\{1, \frac{|x-y|}{R}  \right\}^2|x|^{2\gamma}\,K(x-y) \,dx dy \\
		&\hspace{-3mm} \leq \Lambda \,C_s \,R^{1+2\gamma-2s}.
	\end{align*}
\end{proof}


\section{Proof of Theorems~\ref{Teorema L0} and \ref{Mi teorema general impar}} \label{sec:ProofMainResults}

This section is devoted to proving the results presented in Section~\ref{Sec:Introduction} where no symmetries are assumed.

In order to deal with the first scenario in Theorem~\ref{Teorema L0} we first show that the quotient of two bounded solutions is also bounded:

\begin{proposition}{} \label{Proposicion acotacion sigma}
	Let $L$ be an integro-differential operator of the form \eqref{Tipo operador} satisfying the symmetry and ellipticity conditions \eqref{eq: K1} and \eqref{eq: K3} for some $1/2\leq \underline{s} \leq \overline{s}<1$. Assume that the potential function $c=c(x)$ satisfies condition \eqref{eq: V1} for some positive constant $R_0$. 
	
	For $\alpha>2\overline{s}-1$, let $w$ and $\widetilde{w}$ be two bounded and $C^1$ functions such that $[w']_{C^{\alpha}(\R)}$ and $[\widetilde{w}']_{C^{\alpha}(\R)}$ are finite. In addition, assume that
	$$ w > 0 \,\,\,\,\text{in } \,[-R_0,R_0],$$
	$$Lw-cw \geq 0 \,\,\,\,\text{in } \,\R\setminus[-R_0,R_0],$$
	and
	$$L\widetilde{w} -c\widetilde{w} = 0 \,\,\,\,\text{in } \,\R\setminus[-R_0,R_0].$$
	
	Then, there exists a positive constant $C$ such that
	$$ \left| \frac{\widetilde{w}}{w} \right| \leq C \,\,\,\,\text{in } \,\R. $$
\end{proposition}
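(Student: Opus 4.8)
The plan is to reduce the statement to the exterior maximum principle of Proposition~\ref{Lema principio maximo}, applied to the two auxiliary functions $Cw-\widetilde{w}$ and $Cw+\widetilde{w}$ for a suitably chosen constant $C>0$. To fix $C$, I would first exploit the positivity of $w$ on the \emph{compact} interval $[-R_0,R_0]$: since $w$ is continuous and strictly positive there, it attains a positive minimum $m:=\min_{[-R_0,R_0]}w>0$. As $\widetilde{w}$ is bounded, set $M:=\|\widetilde{w}\|_{L^\infty(\R)}<+\infty$ and choose $C:=M/m$. This choice guarantees $Cw\pm\widetilde{w}\geq Cm-M=0$ on $[-R_0,R_0]$, which is exactly the interior sign condition required by the maximum principle.

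Next I would verify the exterior inequality. Writing $\varphi_{\pm}:=Cw\pm\widetilde{w}$, the linearity of $L$ gives
\begin{equation*}
L\varphi_{\pm}-c\varphi_{\pm}=C\,(Lw-cw)\pm(L\widetilde{w}-c\widetilde{w})\qquad\text{in }\R\setminus[-R_0,R_0].
\end{equation*}
Here the crucial point is that $\widetilde{w}$ solves the equation \emph{exactly} outside the interval, so the term $\pm(L\widetilde{w}-c\widetilde{w})$ vanishes and one is left with $L\varphi_{\pm}-c\varphi_{\pm}=C\,(Lw-cw)\geq 0$ by hypothesis; in particular the (unknown) sign of $\widetilde{w}$ never interferes. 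The regularity hypotheses of Proposition~\ref{Lema principio maximo} are inherited by $\varphi_{\pm}$, since both are bounded and $C^1$ and $[\varphi_{\pm}']_{C^{\alpha}(\R)}\leq C[w']_{C^{\alpha}(\R)}+[\widetilde{w}']_{C^{\alpha}(\R)}<+\infty$ with $\alpha>2\overline{s}-1$. As $L$ and $c$ satisfy \eqref{eq: K1}, \eqref{eq: K3}, and \eqref{eq: V1}, all the assumptions of the exterior maximum principle are met.

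Applying Proposition~\ref{Lema principio maximo} to each of $\varphi_{+}$ and $\varphi_{-}$ then yields $Cw\pm\widetilde{w}\geq 0$ on all of $\R$, that is, $|\widetilde{w}|\leq Cw$ in $\R$. In particular $w\geq 0$ everywhere, and wherever $w$ does not vanish the quotient satisfies $|\widetilde{w}/w|\leq C$; at any point where $w=0$ the bound forces $\widetilde{w}=0$ as well, so the estimate holds throughout $\R$. If one wants strict positivity, $w>0$ on all of $\R$ can be recovered a posteriori: at an exterior zero $x_0$ of the nonnegative function $w$ one would have $Lw(x_0)=-\int_{\R}w(y)\,K(x_0-y)\,dy\leq 0$, with equality only if $w\equiv 0$, contradicting both $Lw(x_0)-c(x_0)w(x_0)\geq 0$ and $w>0$ on the interval. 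Since the real content is entirely carried by the already-established maximum principle, I do not expect a genuine obstacle at this stage; the only points demanding care are the inheritance of the regularity seminorms by the linear combinations and the use of the \emph{exactness} of the equation for $\widetilde{w}$ to eliminate the sign-ambiguous term in the exterior.
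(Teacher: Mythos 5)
Your proof is correct and takes essentially the same approach as the paper: both reduce the statement to the exterior maximum principle of Proposition~\ref{Lema principio maximo} applied to $\varphi_{\pm}=Cw\pm\widetilde{w}$, with $C$ fixed by the data on $[-R_0,R_0]$ and with the exactness of the equation for $\widetilde{w}$ eliminating the sign-ambiguous exterior term. The only (immaterial) difference is the order of operations: the paper first establishes $w>0$ on all of $\R$ via that same maximum principle together with the strong maximum principle and then takes $C\geq \big|\big|\widetilde{w}/w\big|\big|_{L^{\infty}(-R_0,R_0)}$, whereas you take $C=M/m$ upfront and recover the strict positivity of $w$ (needed for the quotient to be well defined) a posteriori by evaluating $Lw$ at a hypothetical exterior zero.
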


\begin{proof}
	First, by applying Proposition~\ref{Lema principio maximo} and the strong maximum principle we deduce
	$$ w > 0 \,\,\,\,\text{in } \,\R. $$
	As a consequence, the quotient $\tilde{w}/w$ is well-defined and continuous in the whole real line. 
	
	Next, we prove that such a quotient is indeed bounded. This will follow after showing the positivity of the functions
	$$ \varphi_\pm = C\,w \pm \widetilde{w}, $$
	where $C$ is a nonnegative constant to be chosen. Note that these functions inherit the regularity of $w$ and $\widetilde{w}$ from being a linear combination of them.
	
	Let us take $C\geq 0$ satisfying
	$$ C \geq \left|\left| \frac{\widetilde{w}}{w} \right|\right|_{L^\infty(-R_0,R_0)}. $$
	It is clear by definition that $ \varphi_\pm\geq 0$ in $[-R_0,R_0]$. Moreover,
	$$ L \varphi_\pm -c\varphi_\pm = C \big(Lw-cw\big) \pm \big(L\widetilde{w}-c\widetilde{w}\big) =  C \big(Lw-cw\big) \geq 0 \,\,\,\,\,\text{in } \,\R\setminus[-R_0,R_0].$$
	Hence, by applying Proposition~\ref{Lema principio maximo} to $\varphi_\pm$ we conclude that
	$$ \varphi_\pm = C\,w \pm \widetilde{w} \geq 0 \,\,\,\,\,\,\, \text{ in } \,\R, $$
	which is equivalent to
	$$ \left| \frac{\widetilde{w}}{w} \right| \leq C \,\,\,\,\text{in } \,\R. $$
\end{proof}

Next, we establish the uniqueness result for the linear equation \eqref{Ecuacion Lineal}. As already explained in the introduction, we present here a more general result from which we will deduce Theorem~\ref{Teorema L0} among others. On the one hand, the ellipticity condition on the kernel is relaxed to \eqref{eq: K3}, which means the kernel being bounded only from above, even with different order at the origin and infinity. On the other hand, it is not needed the existence of a positive solution but a positive supersolution.

\begin{theorem}{} \label{Mi teorema general}
	Let $L$ be an integro-differential operator of the form \eqref{Tipo operador} satisfying the symmetry and ellipticity conditions \eqref{eq: K1} and \eqref{eq: K3} for some $1/2\leq \underline{s}\leq \overline{s}<1$.  
	
	For $\alpha>2\overline{s}-1$, let $w$ and $\widetilde{w}$ be $C^{1,\alpha}$ functions in $\R$. Assume that 
		\begin{itemize}[leftmargin=*]
			\item[$\bullet$]  either $w$ and $\widetilde{w}$ are both bounded and such that $[w']_{C^{\alpha}(\R)}$ and $[\widetilde{w}']_{C^{\alpha}(\R)}$ are finite, $w>0$, and the potential function $c=c(x)$ satisfies condition \eqref{eq: V1};
			\item[$\bullet$]  or $w$ is such that
			$$ 0 < C^{-1} \leq w(x) \leq C\,\,\,\,\,\,\, \text{in} \,\,\,\R,    $$
			and $\tilde{w}$ satisfies the growth condition
			$$ ||\widetilde{w}||_{L^\infty(-R,R)} \leq C R^{\underline{s}-1/2}, \, \, \text{ for every } \, \, R>1$$
			for some positive constant $C$.
		\end{itemize}
	
	In addition, assume that
	$$Lw-cw \geq 0 \,\,\,\,\text{in } \,\R,$$
	and
	$$L\widetilde{w}-c\widetilde{w} = 0 \,\,\,\,\text{in } \,\R.$$

	Then
	$$ \frac{\widetilde{w}}{w} \equiv \text{constant}. $$
\end{theorem}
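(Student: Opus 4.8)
The plan is to follow the nonlocal Liouville scheme: I set $\sigma := \widetilde{w}/w$, which is well defined and continuous because $w>0$ throughout $\R$ (given directly in the first scenario, and a consequence of $w\geq C^{-1}$ in the second), and I aim to prove that the global energy
\[
E := \int_\R\int_\R \big(\sigma(x)-\sigma(y)\big)^2\, w(x)\,w(y)\,K(x-y)\,dx\,dy
\]
vanishes; since $w>0$ and $K>0$ everywhere, $E=0$ forces $\sigma(x)=\sigma(y)$ for a.e.\ pair, hence $\sigma\equiv\text{constant}$. First I would record the size of $\sigma$ in each scenario. In the bounded case Proposition~\ref{Proposicion acotacion sigma} applies (its hypotheses follow from $w>0$, $Lw-cw\geq0$ and $L\widetilde{w}-c\widetilde{w}=0$ holding on all of $\R$, together with \eqref{eq: V1}) and yields $|\sigma|\leq C$. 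In the growth case the two-sided bound on $w$ together with $\|\widetilde{w}\|_{L^\infty(-R,R)}\leq CR^{\underline{s}-1/2}$ gives the pointwise estimate $|\sigma(x)|\leq C|x|^{\underline{s}-1/2}$ for $|x|\geq1$ (and $|\sigma|\leq C$ near the origin). The regularity $w,\widetilde{w}\in C^{1,\alpha}$ with $\alpha>2\overline{s}-1$ is what makes all the double integrals below convergent.

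Next I would insert into Lemma~\ref{Lema ecuacion sigma} the cut-off $\tau=\tau_R$, chosen with $\tau_R\equiv1$ on $[-R,R]$, $\operatorname{supp}\tau_R\subset[-2R,2R]$, $0\leq\tau_R\leq1$, and $|\tau_R'|\leq C/R$; the structural hypotheses $w(Lw-cw)\geq0$ and $\widetilde{w}(L\widetilde{w}-c\widetilde{w})=0\leq 0$ are met. Writing the left-hand side of the lemma as $\mathrm{LHS}(\tau_R)$, factoring $\sigma^2(x)-\sigma^2(y)=(\sigma(x)-\sigma(y))(\sigma(x)+\sigma(y))$ and $\tau_R^2(x)-\tau_R^2(y)=(\tau_R(x)-\tau_R(y))(\tau_R(x)+\tau_R(y))$ on the right, and applying Cauchy--Schwarz, I would bound the right-hand side by the product of $\big(\int_\R\int_\R(\sigma(x)-\sigma(y))^2(\tau_R(x)+\tau_R(y))^2\, w(x)w(y)K(x-y)\big)^{1/2}$ and the error term $\mathrm{Err}(\tau_R):=\int_\R\int_\R(\sigma(x)+\sigma(y))^2(\tau_R(x)-\tau_R(y))^2\, w(x)w(y)K(x-y)$. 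Since $(\tau_R(x)+\tau_R(y))^2\leq2(\tau_R^2(x)+\tau_R^2(y))$, the first factor is $\leq(2\,\mathrm{LHS}(\tau_R))^{1/2}$, and absorbing gives $\mathrm{LHS}(\tau_R)\leq2\,\mathrm{Err}(\tau_R)$.

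The crux is to bound $\mathrm{Err}(\tau_R)$ uniformly. Here I use that $(\tau_R(x)-\tau_R(y))^2\leq C\min\{1,|x-y|/R\}^2$ is supported in the cross-shaped set $S_R$ of Lemma~\ref{Lema conjuntos 1}, and that $(\sigma(x)+\sigma(y))^2\,w(x)\,w(y)\leq C$ (bounded case) or $\leq C(|x|^{2\gamma}+|y|^{2\gamma})$ with $\gamma=\underline{s}-1/2$ (growth case). Corollary~\ref{Corolario integrabilidad}, applied in dimension $n=1$ with $\gamma=0$ respectively $\gamma=\underline{s}-1/2$ (and using the symmetry of $S_R$ in $x,y$ to absorb the $|y|^{2\gamma}$ term into the $|x|^{2\gamma}$ estimate), then yields $\mathrm{Err}(\tau_R)\leq C$ uniformly for $R\geq1$. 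Consequently $\mathrm{LHS}(\tau_R)\leq C$, and since $\mathrm{LHS}(\tau_R)\geq 2\int_{B_R\times B_R}(\sigma(x)-\sigma(y))^2\, w(x)w(y)K(x-y)\,dx\,dy$, letting $R\to\infty$ by monotone convergence gives $E<\infty$. This is precisely where $n=1$ and $s\geq1/2$ are indispensable, as the integrability bound of Corollary~\ref{Corolario integrabilidad} degenerates otherwise; I expect this to be the main obstacle.

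Finally, to upgrade finiteness of $E$ to $E=0$, I would estimate the right-hand side of Lemma~\ref{Lema ecuacion sigma} once more by Cauchy--Schwarz, but now grouping so that $|\mathrm{RHS}(\tau_R)|\leq 2\big(\int_{S_R}(\sigma(x)-\sigma(y))^2\, w(x)w(y)K(x-y)\big)^{1/2}\,\mathrm{Err}(\tau_R)^{1/2}$. The second factor stays bounded by the previous step, while the first tends to $0$ as $R\to\infty$: the density $(\sigma(x)-\sigma(y))^2\, w(x)w(y)K(x-y)$ is integrable (that is, $E<\infty$) and $\mathbf{1}_{S_R}\to0$ pointwise, since any fixed pair eventually leaves $S_R$, so dominated convergence applies. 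Hence $\mathrm{LHS}(\tau_R)\leq|\mathrm{RHS}(\tau_R)|\to0$; comparing with $\mathrm{LHS}(\tau_R)\geq2\int_{B_R\times B_R}(\sigma(x)-\sigma(y))^2\, w(x)w(y)K(x-y)\to2E$ forces $E=0$, and therefore $\sigma$ is constant, as claimed.
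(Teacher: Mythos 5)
Your proposal is correct and takes essentially the same route as the paper's own proof: the same quotient $\sigma=\widetilde{w}/w$ controlled via Proposition~\ref{Proposicion acotacion sigma} in the bounded case (and via the two-sided bound on $w$ in the growth case), the same cut-off inserted into Lemma~\ref{Lema ecuacion sigma}, the same Cauchy--Schwarz absorption combined with the uniform bound from Corollary~\ref{Corolario integrabilidad} on the cross-shaped set $S_R$, and the same two-step conclusion (finiteness of the global energy by monotone convergence, then its vanishing by dominated convergence since $\mathbf{1}_{S_R}\to0$ pointwise). The differences are purely cosmetic, e.g.\ you absorb to get $\mathrm{LHS}(\tau_R)\leq 2\,\mathrm{Err}(\tau_R)$ where the paper writes $J_1^2\leq J_2^2\leq C\,J_1$, so no further comment is needed.
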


In the following proof, once the boundedness of $\sigma := \widetilde{w}/w$ (Proposition~\ref{Proposicion acotacion sigma}) and some integrability estimates (Lemma \ref{Lema integrabilidad}) are established, it will be enough to follow the strategy developed in \cite{HamelRosOtonSireValdinoci} to conclude that such a quotient is constant.

\begin{proof}[Proof of Theorem \ref{Mi teorema general}]

We begin by noticing that, using the bounds on $w$ and $\widetilde{w}$, and applying Proposition~\ref{Proposicion acotacion sigma}, we immediately deduce that $\sigma = \widetilde{w}/w$ satisfies the growth condition $|\sigma(x)|\leq C |x|^{\underline{s}-1/2}$. This is the first step to show that $\sigma$ is constant.

	Let $\eta$ be a $C^\infty$ function on $[0,+\infty)$ such that
	$0\le\eta\le 1$ and
	$$\eta = \begin{cases}
	1 &\text{if $0\le t\le 1$,}\\
	0 &\quad\text{if $t\ge 2$.}
	\end{cases}$$
	For each $R>1$, we take $\eta_R(x)=\eta \left( \frac{\vert x\vert}{R}\right)$. It is clear that it satisfies the pointwise estimate
\begin{align} \label{ecuacion minimo}
	|\eta_R(x)-\eta_R(y)| \leq C \min\left\{1, \frac{|x-y|}{R}\right\} \, \, \text{ for every } \, \, x,y\in \R
\end{align}
and some positive constant $C$ depending only on $\eta$.

	Next, we apply Lemma~\ref{Lema ecuacion sigma} with $\tau = \eta_R$ to deduce
	\begin{align*}
	0 \leq J_1 :&=  \int_\R\int_\R \,\big( \sigma(x)-\sigma(y) \big)^2\big(\eta_R^2(x)+\eta_R^2(y) \big)\,w(x)\,w(y)\,K(x-y)\,\,dx dy \\ &\leq -\int_\R\int_\R \big( \sigma^2(x)-\sigma^2(y) \big)\big( \eta_R^2(x)-\eta_R^2(y)\big)w(x)\,w(y)\,K(x-y)\,dx dy \\ &\leq
	\int_\R \int_\R |\sigma(x)-\sigma(y)||\sigma(x)+\sigma(y)||\eta_R(x)-\eta_R(y)||\eta_R(x)+\eta_R(y)|\cdot\\ &\hspace{73mm} \cdot w(x)\,w(y)\,K(x-y)\,dx dy \\ &=
	\int_{S_R} |\sigma(x)-\sigma(y)||\sigma(x)+\sigma(y)||\eta_R(x)-\eta_R(y)||\eta_R(x)+\eta_R(y)|\cdot\\ &\hspace{73mm} \cdot w(x)\,w(y)\,K(x-y)\,dx dy \\ &=: J_2.
	\end{align*}
	Note that the last equality follows from the support of $|\eta_R(x)-\eta_R(y)|$ being the set $S_R$ defined in Lemma \ref{Lema conjuntos 1}.
	
	Furthermore, by using Cauchy-Schwartz inequality we get
	\begin{align*}
	J_2^2 & \leq \int_{S_R} \big( \sigma(x)-\sigma(y) \big)^2\big(\eta_R(x)+\eta_R(y) \big)^2\,w(x)\,w(y)\,K(x-y)\,dx dy \,\cdot \\
	&\hspace{11mm} \cdot \int_{S_R} \big( \sigma(x)+\sigma(y) \big)^2\big(\eta_R(x)-\eta_R(y) \big)^2\,w(x)\,w(y)\,K(x-y)\,dx dy \\&\leq
	2\, J_1 \int_{S_R} \big(\sigma(x)+\sigma(y)\big)^2\big(\eta_R(x)-\eta_R(y) \big)^2\,w(x)\,w(y)\,K(x-y)\,dx dy.
	\end{align*}
	
	Now, by combining the boundedness of $w$, the growth condition on $\sigma$, the pointwise estimate \eqref{ecuacion minimo} for $\eta_R$, and the integrability result from Corollary~\ref{Corolario integrabilidad}, we find
	\begin{align*}
	\int_{S_R} \big( \sigma(x)+\sigma(y) \big)^2&\big( \eta_R(x)-\eta_R(y) \big)^2\,w(x)\,w(y)\,K(x-y)\,dx\,dy \leq \\ 
	&\hspace{-4mm}\leq C \,\int_{S_R}\big( \eta_R(x)-\eta_R(y) \big)^2\,|\sigma(x)|^2\,K(x-y)\,dx\,dy \leq C.
	\end{align*}
	
	Summarizing, we have
	$$ 0 \leq J_1^2 \leq J_2^2 \leq C\,J_1, $$
	which leads to
	$$ J_1 =  \int_\R\int_\R \big( \sigma(x)-\sigma(y) \big)^2\big(\eta_R^2(x)+\eta_R^2(y) \big)\,w(x)\,w(y)\,K(x-y)\,dx\,dy \leq C. $$
	In particular, since $\eta_R=1$ in $B_R$, we deduce
	$$ \int_{B_R}\int_{B_R} \big( \sigma(x)-\sigma(y) \big)^2\,w(x)\,w(y)\,K(x-y)\,dx\,dy \leq C, $$
	where $C$ is a positive constant not depending on $R$. From that estimate and the monotone convergence theorem we obtain that $\big( \sigma(x)-\sigma(y) \big)^2\,w(x)\,w(y)\,K(x-y)$ belongs to $L^1(\R\times \R)$. Hence, we conclude from the dominated convergence theorem that
	$$ \lim_{R\to\infty} \int_{S_R} \big( \sigma(x)-\sigma(y) \big)^2\,w(x)\,w(y)\,K(x-y) dx\,dy = 0. $$
	
	Combining all this together, we arrive at
	\begin{align*}
	&\left[ \int_\R\int_\R \big( \sigma(x)-\sigma(y) \big)^2\,w(x)\,w(y)\,K(x-y) dx dy \right] ^2 = \\ &\hspace{11mm}= \frac{1}{2} \lim_{R\to\infty} \left[  \int_\R\int_\R \big( \sigma(x)-\sigma(y) \big)^2\big( \eta_R^2(x)+\eta_R^2(y) \big)\,w(x)\,w(y)\,K(x-y) dx dy \right] ^2\\
	&\hspace{11mm}\leq C\,\lim_{R\to\infty}  \int_{S_R} \big( \sigma(x)-\sigma(y) \big)^2\big( \eta_R^2(x)+\eta_R^2(y) \big)\,w(x)\,w(y)\,K(x-y) dx dy \\
	&\hspace{11mm} \leq C\,\lim_{R\to\infty}  \int_{S_R} \big( \sigma(x)-\sigma(y) \big)^2w(x)\,w(y)\,K(x-y) dx dy = 0.
	\end{align*}
	From this and the positivity of both $w$ and $K$, we obtain that $\big(\sigma(x)-\sigma(y)\big)^2=0$ for almost every $(x,y) \in \R\times \R$. Thus, by continuity, we conclude that $$ \sigma = \frac{\tilde{w}}{w} \equiv \text{constant}. $$
\end{proof}

Using the previous result we can easily deduce Theorem~\ref{Teorema L0}. In fact, we only need to check that solutions from Theorem~\ref{Teorema L0} have the required regularity to apply Theorem~\ref{Mi teorema general}. Such property will follow thanks to the regularizing effect of the operators satisfying the ellipticity assumption \eqref{eq: K2}.

\begin{proof}[Proof of Theorem~\ref{Teorema L0}]
	In order to prove Theorem~\ref{Teorema L0} we will show that any bounded solution to the linear equation \eqref{Ecuacion Lineal} with $L$ being of the form \eqref{Tipo operador} and satisfying \eqref{eq: K1} and \eqref{eq: K2} is globally Hölder continuous with exponent $\alpha+1 > 2s$ (we use here the notation $C^\gamma = C^{\lfloor \gamma \rfloor,\gamma-\lfloor \gamma \rfloor}$ whenever $\gamma>1$). From this and Theorem~\ref{Mi teorema general}, the uniqueness result will follow.
	
	The proof of the regularity is based on defining the auxiliary function $f(x):=c(x)u(x)$ and using the interior regularity results from \cite{Serra-C2s+alphaRegularity} for the nonlocal equation
	$$ Lu = f \,\,\,\,\, \text{in} \,\, B_1\subset\R^n. $$
	
	Let us first prove that any solution $u$ satisfies $||u||_{C^{\beta}(\R)}<\infty$ for each $\beta<2s$. The boundedness of both $u$ and $c$ leads to $f\in L^\infty(\R)$. Thus, we can apply Corollary 1.2 from \cite{Serra-C2s+alphaRegularity} for each unitary ball in $\R$ to conclude that
	\begin{align*}
		||u||_{C^\beta\left(B_{1/2}(x_0)\right)} &\leq C \left( ||f||_{L^\infty\left(B_{1}(x_0)\right)} + ||u||_{L^\infty\left(\R\right)} \right) \\&\leq C \left( ||f||_{L^\infty\left(\R\right)} + ||u||_{L^\infty\left(\R\right)} \right)
	\end{align*}
	for any given point $x_0 \in \R$ and $\beta<2s$.
	
	In particular, we know that $||u||_{C^{\beta_0}(\R)}$ is finite. Hence, we can use the fact that $||c||_{C^{\beta_0}(\R)}$ is also finite to deduce that $f$ inherits such a property and apply Theorem~1.1 from \cite{Serra-C2s+alphaRegularity} to establish
	\begin{align*}
		||u||_{C^{2s+\beta_0}\left(B_{1/2}(x_0)\right)} &\leq C \left( ||f||_{C^{\beta_0}\left(B_{1}(x_0)\right)} + ||u||_{C^{\beta_0}\left(\R\right)} \right) \\&\leq C \left( ||f||_{C^{\beta_0}\left(\R\right)} + ||u||_{C^{\beta_0}\left(\R\right)} \right).
	\end{align*}

	Finally, if we take $\alpha:=2s+\beta_0-1$, we can apply Theorem~\ref{Mi teorema general} to deduce the uniqueness of solution, concluding the proof.
\end{proof}

We state now an interesting consequence of Theorem~\ref{Mi teorema general} which is not included in Theorem~\ref{Teorema L0}. It deals with sums of fractional Laplacians.

\begin{corollary}{} \label{Corolario suma fraccionarios}
	Let $L$ be a nonlocal operator of the form
	$$ Lu = \int_{\underline{s}}^{\overline{s}} (-\Delta)^su\,d\mu(s), $$
	with $1/2\leq \underline{s} \leq \overline{s} < 1$, where $\mu$ is a probability measure supported in
	$[\underline{s},\overline{s}]$, i.e.,
	$$ \mu\geq 0 \,\,\, \text{ and } \,\,\, \mu\left([\underline{s},\overline{s}]\right) = \mu(\R) =1.$$
	Assume that $c$ is bounded in $\R$, satisfies condition \eqref{eq: V1}, and $||c||_{C^{1,2\overline{s}-1}(\R)} < +\infty$.
	
	Let $w$ and $\widetilde{w}$ be two bounded solutions of the linear equation
	$$ L\varphi-c(x)\varphi=0 \,\,\,\,\,\,\, \text{in} \,\,\,\R, $$
	with $w>0$.
	Then
	$$ \frac{\widetilde{w}}{w} \equiv \text{constant}. $$
\end{corollary}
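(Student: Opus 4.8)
The plan is to deduce the result from Theorem~\ref{Mi teorema general}. To this end I must (i) realize $L$ as an operator of the form \eqref{Tipo operador}, identifying its kernel; (ii) check that this kernel satisfies the symmetry and upper‑ellipticity conditions \eqref{eq: K1} and \eqref{eq: K3} with the given $\underline{s},\overline{s}$; and (iii) verify that the bounded solutions $w,\widetilde{w}$ are in fact $C^{1,\alpha}$ with $\alpha>2\overline{s}-1$ and finite global seminorms $[w']_{C^\alpha(\R)}$, $[\widetilde{w}']_{C^\alpha(\R)}$. Once (i)--(iii) are in place, the first bullet of Theorem~\ref{Mi teorema general} applies directly (recall $c$ is bounded and satisfies \eqref{eq: V1}), and yields $\widetilde{w}/w\equiv\text{constant}$.

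For (i), in dimension one $(-\Delta)^s$ has kernel $c_{1,s}|z|^{-1-2s}$ with $c_{1,s}>0$. Writing each $(-\Delta)^s$ in second–difference form and applying Fubini (legitimate for $C^{1,\alpha}$ functions with $\alpha>2\overline{s}-1$, the class in which the solutions will be shown to lie, so that the double integral converges absolutely), one obtains
\begin{equation*}
Lu(x)=\PV\int_\R \big(u(x)-u(y)\big)K(x-y)\,dy,\qquad K(z)=\int_{\underline{s}}^{\overline{s}} \frac{c_{1,s}}{|z|^{1+2s}}\,d\mu(s).
\end{equation*}
Since $s\mapsto c_{1,s}$ is continuous and positive on the compact interval $[\underline{s},\overline{s}]\subset(0,1)$, it is bounded there: $0<\lambda_0\le c_{1,s}\le\Lambda_0<\infty$. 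For (ii), property \eqref{eq: K1} is immediate, as $K(z)>0$ (because $c_{1,s}>0$ and $\mu\ge 0$ is not the zero measure) and $K(-z)=K(z)$ (since $K$ depends only on $|z|$). For \eqref{eq: K3} I split according to whether $|z|\le 1$ or $|z|>1$: for every $s\in[\underline{s},\overline{s}]$ one has $|z|^{-1-2s}\le|z|^{-1-2\overline{s}}$ when $|z|\le 1$ and $|z|^{-1-2s}\le|z|^{-1-2\underline{s}}$ when $|z|>1$, so in all cases $|z|^{-1-2s}\le |z|^{-1-2\underline{s}}+|z|^{-1-2\overline{s}}$. Integrating against $d\mu$ and using $\mu(\R)=1$ and $c_{1,s}\le\Lambda_0$ gives \eqref{eq: K3} with $\Lambda_1=\Lambda_2=\Lambda_0$.

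Step (iii) is the main obstacle, and is where this proof genuinely differs from that of Theorem~\ref{Teorema L0}. There, \eqref{eq: K2} held and one could quote the interior estimates of \cite{Serra-C2s+alphaRegularity} for a single order $2s$. Here $L$ is of \emph{mixed order}: near the origin one only has $\lambda_0|z|^{-1-2\underline{s}}\le K(z)\le\Lambda_0|z|^{-1-2\overline{s}}$, so no two–sided bound \eqref{eq: K2} of a fixed order is available at all scales. Nevertheless the guaranteed lower ellipticity is of order $2\underline{s}$ with $\underline{s}\ge 1/2$, so I would run the same bootstrap as in the proof of Theorem~\ref{Teorema L0}: the interior estimates applicable to this kernel class provide a gain of at least $2\underline{s}\ge 1$ Hölder derivatives, whence $f:=cu\in L^\infty$ yields $u\in C^{\beta}$ for some $\beta>2\underline{s}-\varepsilon$, and then, since $c\in C^{1,2\overline{s}-1}=C^{2\overline{s}}$, the product $f=cu$ gains regularity at every step. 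Because each step adds at least one derivative while $c$ supplies regularity up to order $2\overline{s}$, after finitely many iterations $u\in C^{\gamma}$ for some $\gamma>2\overline{s}$, that is $u\in C^{1,\alpha}$ with $\alpha=\gamma-1>2\overline{s}-1$, the uniform interior estimates giving the finiteness of $[u']_{C^\alpha(\R)}$. Applying this to $w$ and $\widetilde{w}$ and invoking Theorem~\ref{Mi teorema general} completes the proof. The delicate point to pin down rigorously is the interior regularity statement for the mixed–order kernel $K$; in the model case of a finite sum $\sum_i a_i(-\Delta)^{s_i}$ (atomic $\mu$ with an atom at $\overline{s}$), $K$ is two–sidedly comparable to $|z|^{-1-2\overline{s}}$ near the origin, so \eqref{eq: K2} holds at small scales and the estimates of \cite{Serra-C2s+alphaRegularity} apply verbatim.
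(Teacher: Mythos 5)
Your steps (i) and (ii) --- realizing $L$ as an operator of the form \eqref{Tipo operador} with kernel $K(z)=\int_{\underline{s}}^{\overline{s}} c_{1,s}\,|z|^{-1-2s}\,d\mu(s)$ and checking \eqref{eq: K1}, \eqref{eq: K3} via the splitting $|z|\leq 1$ versus $|z|>1$ --- coincide with the paper's argument and are correct. The genuine gap is step (iii), and you flag it yourself: you never produce an interior regularity estimate valid for this mixed-order kernel. The assertion that ``the interior estimates applicable to this kernel class provide a gain of at least $2\underline{s}\geq 1$ H\"older derivatives'' is exactly the statement that needs a proof or a citation; the results of \cite{Serra-C2s+alphaRegularity} used in the proof of Theorem~\ref{Teorema L0} require the two-sided bound \eqref{eq: K2} for a single order $2s$, which fails here. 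It fails even in your ``model'' atomic case: for a finite sum $\sum_i a_i(-\Delta)^{s_i}$ the kernel is comparable to $|z|^{-1-2\overline{s}}$ near the origin but to $|z|^{-1-2\underline{s}}$ at infinity, so the \emph{global} upper bound of order $2\overline{s}$ demanded by \eqref{eq: K2} is violated and those estimates do not apply ``verbatim''. Consequently your bootstrap rests on an unnamed black box, and the claim that each iteration gains at least one derivative has no quantitative backing.

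The paper closes precisely this hole by invoking the regularity theory developed by Cabr\'e and Serra for sums (in fact, integrals) of fractional Laplacians, \cite{CabreSerra-SumLaplacians}, whose hypotheses are tailored to operators of exactly this form. Since $c\in C^{1,2\overline{s}-1}(\R)$, Lemma~2.1 of \cite{CabreSerra-SumLaplacians} is applied in a bootstrap $\left\lfloor 2\overline{s}/\alpha\right\rfloor+1$ times, where $\alpha>0$ depends only on $\underline{s}$ and $n$, yielding $u\in C^{1,2\overline{s}-1+\beta}(\R)$ with $\beta=\left\lfloor 2\overline{s}/\alpha\right\rfloor\alpha+\alpha-2\overline{s}>0$; this is the $C^{1,\alpha}$ regularity with $\alpha>2\overline{s}-1$ (and finiteness of the global seminorms) required to invoke Theorem~\ref{Mi teorema general}. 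So your reduction to Theorem~\ref{Mi teorema general} is the right strategy --- the same as the paper's --- but the missing ingredient is a citable interior estimate for kernels of mixed order, and without it the proof is incomplete.
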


\begin{proof}
	In order to establish Corollary~\ref{Corolario suma fraccionarios} we only need to show that the operator $L$ is of the form \eqref{Tipo operador} satisfying \eqref{eq: K1} and \eqref{eq: K3} and that bounded solutions of the linear equation are globally H\"older continuous with exponent grater than $2\overline{s}$.
	
	First, let us rewrite the expression of $L$ in an alternative way:
	\begin{align*}
		Lu &:= \int_{\underline{s}}^{\overline{s}} (-\Delta)^su\,d\mu(s) = \int_{\underline{s}}^{\overline{s}} \left[ \int_\R \frac{u(x)-u(y)}{|x-y|^{1+2s}} dy \right]d\mu(s)\\
		& = \int_\R \big(u(x)-u(y)\big) \left( \int_{\underline{s}}^{\overline{s}} \frac{d\mu(s)}{|x-y|^{1+2s}}   \right) dy.
	\end{align*}
	Thus, $L$ is an integral operator of the form \eqref{Tipo operador} with kernel
	$$ K(z) = \int_{\underline{s}}^{\overline{s}} \frac{d\mu(s)}{|z|^{1+2s}}. $$

	Moreover, it satisfies conditions \eqref{eq: K1} and \eqref{eq: K3}. Indeed,
	\begin{align*}
		K(z) &\leq \int_{\underline{s}}^{\overline{s}} \frac{d\mu(s)}{|z|^{1+2\overline{s}}} \chi_{\{|z|\leq 1\}}(z) + \int_{\underline{s}}^{\overline{s}} \frac{d\mu(s)}{|z|^{1+2\underline{s}}} \chi_{\{|z|\geq 1\}}(z) \\
		&= \frac{1}{|z|^{1+2\overline{s}}} \chi_{\{|z|\leq 1\}}(z) + \frac{1}{|z|^{1+2\underline{s}}} \chi_{\{|z|\geq 1\}}(z) \\
		&\leq \frac{1}{|z|^{1+2\overline{s}}} + \frac{1}{|z|^{1+2\underline{s}}}.
	\end{align*}

Next, let us apply the regularity results from \cite{CabreSerra-SumLaplacians} to deduce the Hölder regularity of bounded solutions. Since $c$ is a $C^{1,2\overline{s}-1}$ function we can use a standard bootstrap argument that leads to the desired regularity of the solution after using Lemma~2.1 from \cite{CabreSerra-SumLaplacians} $\left \lfloor{2\overline{s}/\alpha}\right \rfloor+1$ times, where $\alpha$ is a positive constant depending only $\underline{s}$ and $n$. Thus, we conclude that $u$ belongs to $C^{1,2\overline{s}-1+\beta}$ in $\R$ with $\beta = \left \lfloor{2\overline{s}/\alpha}\right \rfloor\alpha+\alpha-2\overline{s}>0$.
	
Combining all this, we can apply Theorem \ref{Mi teorema general} to establish the uniqueness of solutions.
\end{proof}

Finally, we prove the nondegeneracy of layer solutions, Theorem~\ref{Nondegeneracy Layer}.

\begin{proof}[Proof of Theorem~\ref{Nondegeneracy Layer}]
	First, we know by Theorem 1 in
	\cite{CozziPassalacqua} that the layer solution $u$ is a $C^{2,2s-1+\gamma}$ function for some $\gamma>0$ and $u'$ is bounded in the whole line.
	
	We need to show that $u'$ is the unique bounded solution to
	\begin{equation} \label{linearized equation 1}
		L v - f'(u)v = 0 \,\,\,\,\,\, \text{in} \,\,\R.
	\end{equation}
	
	Let us take $c(x) = f'(u(x))$. We only need to check that the hypotheses in Theorem~\ref{Teorema L0} are satisfied. Since $f'\in C^\gamma([-1,1])$ and $u$ is a continuous and bounded function, it is clear that $c$ is bounded and such that $[c]_{C^\gamma(\R)}$ is finite. Furthermore
	$$ \lim_{x\to\pm \infty} c(x) = \lim_{z\to\pm 1} f'(z) = f'(\pm 1) <0. $$
	From this property and the continuity of $c$ we deduce that condition \eqref{eq: V1} is satisfied.
	
	Finally, since $u'$ is a $C^{1,2s-1+\gamma}$ and positive (by definition of layer) bounded solution to \eqref{linearized equation 1} we can apply Theorem~\ref{Teorema L0} to conclude the proof of the result. 
\end{proof}

\section{Odd solutions: Proof of Theorem~\ref{Mi teorema general impar} and Corollary~\ref{Nondegeneracy Ground State}} \label{sec:ProofMainResultsOdd}

In this section we prove the main results of the paper dealing with odd functions.

We begin by establishing that the quotient of an odd bounded solution and an odd bounded positive supersolution is also bounded.

\begin{proposition}{} \label{Proposicion acotacion sigma impar}
	Let $L$ be an integro-differential operator of the form \eqref{Tipo operador} with nonincreasing kernel $K$ satisfying the symmetry and ellipticity conditions \eqref{eq: K1} and \eqref{eq: K2} for some $s \in [1/2,1)$ and $0<\lambda\leq \Lambda$. Assume the potential function $c=c(x)$ is bounded, even, and satisfies condition \eqref{eq: V1} for some positive constants $R_0$ and $c_0$. 
	
	For $\alpha>2s-1$, let $w$ and $\widetilde{w}$ be two odd bounded and $C^{1}$ functions such that $[w']_{C^\alpha(\R)}$ and $[\tilde{w}']_{C^\alpha(\R)}$ are finite and satisfy
	$$ w > 0 \,\,\,\,\text{in } \,(0,R_0),$$
	$$Lw-cw \geq 0 \,\,\,\,\text{in } \,[0,+\infty),$$
	and
	$$L\widetilde{w}-c\widetilde{w} = 0 \,\,\,\,\text{in } \,[0,+\infty).$$
	
	Then, there exists a constant $C\geq 0$ such that
	$$ \left| \frac{\widetilde{w}}{w} \right| \leq C \,\,\,\,\text{in } \,\R. $$
\end{proposition}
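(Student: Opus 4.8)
The plan is to mirror the proof of Proposition~\ref{Proposicion acotacion sigma} from the non-symmetric setting, replacing the maximum principle of Proposition~\ref{Lema principio maximo} by its odd counterpart, Proposition~\ref{Lema principio maximo impar}. The only genuinely new feature is the origin: since $w$ is odd we have $w(0)=0$, so the denominator of the quotient vanishes there, and this is precisely what the small domain condition \eqref{Eq: SmallDomain} is built to handle.

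First I would fix a radius $r_0\in(0,R_0)$ small enough that the small domain condition \eqref{Eq: SmallDomain} holds; this is possible because $\|c\|_{L^\infty(\R)}$ is finite while $\lambda/(s\,r_0^{2s})\to+\infty$ as $r_0\to0^+$. With this $r_0$, I would apply Proposition~\ref{Lema principio maximo impar} to $\varphi=w$ itself: the hypotheses $Lw-cw\geq 0$ in $(0,r_0)\cup(R_0,+\infty)$ and $w\geq 0$ in $[r_0,R_0]$ (the latter because $w>0$ in $(0,R_0)$ and $w$ is continuous at $R_0$) are met, so the proposition gives $w\geq 0$ in $[0,+\infty)$. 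A strong maximum principle then upgrades this to $w>0$ in $(0,+\infty)$: were $x_*>0$ the first zero of $w$ (necessarily $x_*\geq R_0$, since $w>0$ in $(0,R_0)$), Lemma~\ref{Lemma: AlternativeExpressionOdd} together with $w(x_*)=0$ would give $Lw(x_*)=-\int_0^\infty w(y)\big(K(x_*-y)-K(x_*+y)\big)\,dy$, an integrand that is nonnegative by the monotonicity of $K$ and strictly positive for $y$ just below $x_*$ (there $w>0$, while the lower bound in \eqref{eq: K2} forces $K(x_*-y)\to+\infty$ as $y\to x_*^-$ with $K(x_*+y)$ bounded); hence $Lw(x_*)<0$, contradicting $Lw(x_*)\geq c(x_*)w(x_*)=0$. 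Consequently the quotient $\sigma=\widetilde{w}/w$ is well-defined and continuous on $(0,+\infty)$.

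To bound $\sigma$ I would then establish the positivity of the two odd functions $\varphi_\pm:=C\,w\pm\widetilde{w}$, choosing $C\geq 0$ with $C\geq\|\widetilde{w}/w\|_{L^\infty([r_0,R_0])}$; this supremum is finite because $w$ is bounded below by a positive constant on the compact set $[r_0,R_0]$ while $\widetilde{w}$ is bounded. With this choice $\varphi_\pm\geq 0$ in $[r_0,R_0]$ by construction, and the linearity of $L$ together with the equations for $w$ and $\widetilde{w}$ gives
$$L\varphi_\pm-c\varphi_\pm=C\big(Lw-cw\big)\pm\big(L\widetilde{w}-c\widetilde{w}\big)=C\big(Lw-cw\big)\geq 0\quad\text{in }[0,+\infty).$$
Since $\varphi_\pm$ are odd, bounded, and $C^1$ with H\"older derivative (being linear combinations of $w$ and $\widetilde{w}$), Proposition~\ref{Lema principio maximo impar} applies and yields $\varphi_\pm\geq 0$ in $[0,+\infty)$, i.e.\ $|\widetilde{w}|\leq C\,w$ there. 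As $w>0$ in $(0,+\infty)$, this is exactly $|\widetilde{w}/w|\leq C$ in $(0,+\infty)$, and the bound extends to all of $\R$ by the evenness of the quotient of two odd functions.

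The step I expect to be the main obstacle is controlling everything near the origin. In the non-symmetric case one could impose the sign condition on a fixed interval $[-R_0,R_0]$ containing the relevant region; here $w$ vanishes at $0$, so the maximum principle cannot be posed on a full neighborhood of the origin, and the quotient cannot be estimated by its values on any interval that contains $0$. The resolution is the small domain condition \eqref{Eq: SmallDomain}, which keeps Proposition~\ref{Lema principio maximo impar} valid on the small set $(0,r_0)$ despite the vanishing of $w$; this is why positivity of $w$ is only required on $[r_0,R_0]$ and why the transition at the origin is absorbed automatically through the oddness, as $\varphi_\pm(0)=0$.
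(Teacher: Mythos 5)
Your proposal is correct and follows essentially the same route as the paper's proof: fix $r_0$ satisfying the small domain condition \eqref{Eq: SmallDomain}, use Proposition~\ref{Lema principio maximo impar} to obtain $w>0$ in $(0,+\infty)$, and then apply the same maximum principle to $\varphi_\pm = C\,w \pm \widetilde{w}$ with $C \geq \|\widetilde{w}/w\|_{L^\infty(r_0,R_0)}$, extending the bound to all of $\R$ by oddness. The only (harmless) deviation is that where the paper cites the strong maximum principle for odd functions from \cite{JarohsWeth}, you prove that step inline via Lemma~\ref{Lemma: AlternativeExpressionOdd} and a first-zero argument, which is a valid self-contained substitute.
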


\begin{proof}
	First, by applying Proposition~\ref{Lema principio maximo impar} and the strong maximum principle for odd functions (see Proposition 3.6 in \cite{JarohsWeth}) we get
	$$ w > 0 \,\,\,\,\text{in } \,(0,+\infty). $$
	As a consequence, the quotient $\sigma:=\tilde{w}/w$ is well-defined and continuous in $\R\setminus\{0\}$. 
	
	We will show that the quotient can be extended to be continuous and bounded in the whole real line. As in Proposition~\ref{Proposicion acotacion sigma}, this will follow after showing the positivity of the functions
	$$ \varphi_\pm = C\,w \pm \widetilde{w}, $$
	for some positive constant $C$ in $[0,+\infty)$.
	
	For this, let us take $r_0$ and $C$ such that
	$$ 0<r_0<\sqrt[2s]{\frac{\lambda}{s ||c||_{L^\infty(\R)}}}, $$
	and
	$$ C \geq \left|\left| \frac{\widetilde{w}}{w} \right|\right|_{L^\infty(r_0,R_0)}. $$
	Note that the existence of such constants is guaranteed by the boundedness of the potential function $c$ and the positivity of $w$.
	
	Now, it is enough to check that the hypotheses of Proposition~\ref{Lema principio maximo impar} are satisfied. By the choice of $C$, it is clear that $\varphi_\pm\geq 0$ in $[r_0,R_0]$ and
	$$ L \varphi_\pm -c\varphi_\pm = C \big(Lw-cw\big) \pm \big(L\widetilde{w}-c\widetilde{w}\big) =  C \big(Lw-cw\big) \geq 0 \,\,\,\,\,\text{in } \,\R^+.$$
	Furthermore, both functions $\varphi_\pm$ are odd and inherit the regularity of $w$ and $\widetilde{w}$ from being linear combinations of them.
	
	Thus, Proposition~\ref{Lema principio maximo impar} leads to
	$$ \varphi_\pm = C\,w \pm \widetilde{w} \geq 0 \,\,\,\,\,\,\, \text{ in } \,\,[0,\infty), $$
	which is equivalent to
	$$ \left| \frac{\widetilde{w}}{w} \right| \leq C \,\,\,\,\text{in } \,(0,+\infty). $$
	
	Finally, by the continuity of both $w$ and $\tilde{w}$ we can extend the result to the whole real line, concluding the proof.
\end{proof}

At this point we have all the ingredients to prove that the quotient of two odd solutions to \eqref{Ecuacion Lineal}, with one of them changing sign only once, is not only bounded but constant.

\begin{proof}[Proof of Theorem \ref{Mi teorema general impar}]
	The proof of this result is completely analogous to the one of Theorem~\ref{Mi teorema general}, applying Proposition~\ref{Proposicion acotacion sigma impar}, Corollary~\ref{Corolario integrabilidad impar}, and Corollary~\ref{Lema ecuacion sigma impar} instead of Proposition~\ref{Proposicion acotacion sigma}, Corollary~\ref{Corolario integrabilidad}, and Lemma~\ref{Lema ecuacion sigma}. 
\end{proof}

Finally, we prove Corollary~\ref{Nondegeneracy Ground State}.

\begin{proof}[Proof of Corollary \ref{Nondegeneracy Ground State}]
	First, let us point out that by the regularity theory for nonlocal equations and Proposition 1.1 in \cite{FrankLenzmann} we know that $u$ is a $C^{2,2s-1+\gamma}$ function in $\R$ for some $\gamma>0$. Furthermore, $u$ is strictly decreasing in $\R^+=(0,+\infty)$ with $u'$ being bounded. Note that the even symmetry of $u$ leads to the odd symmetry of $u'$. 

	We need to show that $u'$ is the unique bounded odd solution to
	\begin{equation} \label{linearized equation 2}
		L v - f'(u)v = 0 \,\,\,\,\,\, \text{in} \,\,\R.
	\end{equation}

	Let us take $c(x) = f'(u(x))$. It is enough to check that the hypotheses in Theorem~\ref{Mi teorema general impar} are satisfied. Since $f'\in C^\gamma([0,1])$ and $u$ is an even, continuous, and bounded function, we deduce that $c$ is even, bounded, and such that $[c]_{C^\gamma(\R)}$ is finite. Moreover, it satisfies
	$$ \lim_{x\to\pm \infty} c(x) = \lim_{z\to 0} f'(z) = f'(0) <0. $$
	Hence, \eqref{eq: V1} holds.

	Finally, since $-u'$ is a positive bounded odd solution to \eqref{linearized equation 2} in $\R^+$ we can apply Theorem~\ref{Mi teorema general impar} to complete the proof.
\end{proof}

\section*{Acknowledgments}
The author thanks Xavier Cabré for his guidance and useful discussions on the topic of this paper.

\bibliographystyle{amsplain}
\bibliography{biblio}

\end{document}